\begin{document}
\begin{frontmatter}
  \title{Cartesian Differential Kleisli Categories}
  \author{Jean-Simon Pacaud Lemay\thanksref{a}\thanksref{myemail}}	

          \address[a]{School of Mathematical and Physical Sciences
 \\ Macquarie University\\				
    Sydney, Australia}  							
   \thanks[myemail]{Email: \href{mailto:js.lemay@mq.edu.au} {\texttt{\normalshape
        js.lemay@mq.edu.au}}} 
\begin{abstract} Cartesian differential categories come equipped with a differential combinator which axiomatizes the fundamental properties of the total derivative from differential calculus. The objective of this paper is to understand when the Kleisli category of a monad is a Cartesian differential category. We introduce Cartesian differential monads, which are monads whose Kleisli category is a Cartesian differential category by way of lifting the differential combinator from the base category. Examples of Cartesian differential monads include tangent bundle monads and reader monads. We give a precise characterization of Cartesian differential categories which are Kleisli categories of Cartesian differential monads using abstract Kleisli categories. We also show that the Eilenberg-Moore category of a Cartesian differential monad is a tangent category. 
\end{abstract}
\begin{keyword} Cartesian Differential Categories, Cartesian Differential Monads, Kleisli Categories 
\end{keyword}
\end{frontmatter}
\section{Introduction}

Cartesian differential categories, introduced by Blute, Cockett, and Seely in \cite{blute2009cartesian}, provide the categorical foundations of multivariable differential calculus. Briefly, a Cartesian differential category (Def \ref{cartdiffdef}) is a category with finite products that comes equipped with a differential combinator $\mathsf{D}$, which for every map $f: A \to B$ produces its derivative $\mathsf{D}[f]: A \times A \to B$, satisfying seven axioms that are analogues of the fundamental properties of the total derivative, including the chain rule. Cartesian closed differential categories \cite{Cockett-2019} provide the categorical semantics of Ehrhard and Regnier's differential $\lambda$-calculus \cite{ehrhard2003differential}. Cartesian (closed) differential categories have found numerous applications in computer science such as for causal computations \cite{sprunger2019differentiable}, differentiable programming languages \cite{abadi2019simple,cruttwell2020categorical}, incremental computation \cite{alvarez2020cartesian,alvarez2019change}, game theory \cite{laird2013constructing}, and for automatic differentiation and machine learning \cite{cockettetal:LIPIcs:2020:11661,cruttwell2021categorical,wilson2021reverse}.

There is no shortage of interesting examples of Cartesian (closed) differential categories in the literature. In \cite{bauer2018directional}, Bauer et al.\! provide an Abelian functor calculus model of a Cartesian differential category by considering the homotopy category of the Kleisli category of the chain complex monad. The development of this example was quite important since it steered the theory of Cartesian differential categories into a new field. However, from a certain point of view, the construction of this example is somewhat unexpected since usually Cartesian differential categories are constructed as coKleisli categories. Indeed, important examples of Cartesian differential categories are the coKleisli categories of the comonads of categorical models of Differential Linear Logic \cite{blute2009cartesian,blute2015cartesian}, or more generally the coKleisli categories of Cartesian differential comonads \cite{ikonicoff2021cartesian}. In this case, the differential combinator in the coKleisli category is constructed using a natural transformation associated with the comonad. 

As such, a natural question to ask is whether it is possible to generate examples of Cartesian differential categories instead as Kleisli categories. The answer is not as straightforward as simply dualizing the construction from the coKleisli story, since the definition of a Cartesian differential category is not self-dual. Furthermore, while some of the necessary underlying structure, specifically the Cartesian left $k$-structure (Def \ref{LACdef}), comes for free in the coKleisli setting, they instead require extra assumptions for the Kleisli setting. Therefore, the Kleisli story will instead start with a monad on a Cartesian differential category and \emph{lift} the differential combinator to its Kleisli category. 

Lifting structure to Kleisli categories is always an interesting question in category theory, but it is also an important concept in computer science \cite{bucalo2003equational,mulry1994lifting,mulry2002lifting}. Following Moggi's groundbreaking work on the importance of monads for programming languages \cite{moggi1991notions}, maps in Kleisli categories are interpreted as effectful programs. Solutions to the lifting problem allow us to extend desirable structures or properties of the base programming language to the effectful programming language, which is always advantageous. By extending the differential combinator to effectful programs, we would be able to apply differential calculus-based techniques and algorithms on effectful programs. 

 In this paper, we introduce Cartesian differential monads (Def \ref{def:CDM}), which are precisely the kinds of monads on Cartesian differential categories which lift the Cartesian differential structure to their Kleisli category. Concretely, if $\mathbb{S}=(\mathbb{S}, \mu, \eta)$ is a Cartesian differential monad on a Cartesian differential category $\mathbb{X}$, then its Kleisli category $\mathsf{KL}(\mathbb{S})$ is a Cartesian differential category such that the canonical adjoint functors $\mathsf{L}_\mathbb{S}: \mathbb{X} \to \mathsf{KL}(\mathbb{S})$ and $\mathsf{R}_\mathbb{S}: \mathsf{KL}(\mathbb{S}) \to \mathbb{X}$ preserve the Cartesian differential structure (Thm \ref{thm:CDMKleisli}). In particular, this implies that the differential combinator in the Kleisli category $\mathsf{KL}(\mathbb{S})$ is the same as the one in the base category $\mathbb{X}$. Type-wise this makes sense since for a map of type $A \to \mathsf{S}(B)$, which is a map in the Kleisli category, its derivative is of type $A \times A \to \mathsf{S}(B)$, which is still a map in the Kleisli category. However, one must be careful since composition in $\mathsf{KL}(\mathbb{S})$ is different from the one in $\mathbb{X}$. In light of this, for a Cartesian differential monad, we ask that both the multiplication $\mu$ and unit $\eta$ be \emph{differential linear} maps. This allows the differential combinator to behave well with Kleisli composition, so that the differential combinator axioms, in particular the chain rule, hold in the Kleisli category. 

We consider two important examples of Cartesian differential monads. The first is the tangent bundle monad (Ex \ref{ex:tanbun}), which is a canonical monad for any Cartesian differential category which is built using the differential combinator. The maps in the Kleisli category of the tangent bundle monad can be interpreted as generalized vector fields (Ex \ref{ex:tanbunKleisli}), which were studied in \cite{alvarez2020cartesian,alvarez2019change}. The second kinds are the reader monads in a Cartesian \emph{closed} differential category (Ex \ref{ex:reader}). The Kleisli category for a reader monad encodes differentiation in context (Ex \ref{ex:readerKleisli}). Furthermore, using F\"{u}hrmann's notion of abstract Kleisli categories \cite{fuhrmann1999direct}, we can provide a precise characterization of which Cartesian differential categories are Kleisli categories of Cartesian differential monads (Prop \ref{propab1} \& \ref{propab2}). 

We also consider a slightly more general concept by introducing Kleisli differential combinators (Def \ref{def:KDC}), which provide a direct description of a differential combinator in a Kleisli category. On the one hand, for a monad with a Kleisli differential combinator, its Kleisli category is a Cartesian differential category. On the other hand, every Cartesian differential monad has a canonical Kleisli differential combinator. One slight advantage of Kleisli differential combinators compared to Cartesian differential monads is that the former does not require the base category to be a Cartesian differential category. That said, Cartesian differential monads are much simpler in comparison. However, we will explain how from a monad with a Kleisli differential combinator, we can always construct a Cartesian differential monad whose Kleisli category is precisely the same as the starting monad's. 

Lastly, another important question for a monad is what are its algebras and Eilenberg-Moore category. For a Cartesian differential monad, the answer is that its Eilenberg-Moore category is a tangent category (Thm \ref{thm:EMCDM}). While Cartesian differential categories formalize differential calculus over Euclidean spaces, tangent categories, introduced by Cockett and Cruttwell in \cite{cockett2014differential}, instead formalize differential calculus over smooth manifolds and their tangent bundles. As such, algebras of a Cartesian differential monad can be interpreted as abstract smooth manifolds. A differential object in a tangent category is an analogue of a Euclidean space, and the subcategory of differential objects is a Cartesian differential category. We show that for a Cartesian differential monad, an algebra is a differential object if and only if its algebra structure is differential linear. Furthermore, the Kleisli category embeds as a Cartesian differential category into the subcategory of differential objects of the Eilenberg-Moore category (Lem \ref{lem:diffobjCDM}).   

\section{Cartesian Differential Categories}

In this background section, if only to introduce notation and terminology, we quickly review Cartesian differential categories. In this paper, we will work with Cartesian differential categories relative to a fixed commutative semiring $k$, as introduced by Garner and the author in \cite{garner2020cartesian}. When $k= \mathbb{N}$, the semiring of natural numbers, we obtain precisely Blute, Cockett, and Seely's original definition of a Cartesian differential category in \cite{blute2009cartesian}. For a more in-depth introduction to Cartesian differential categories, we refer the reader to those papers: \cite{blute2009cartesian,garner2020cartesian}. 

The underlying structure of a Cartesian differential category is that of a Cartesian left $k$-linear category, which can be described as a category with finite products which is \emph{skew}-enriched over the category of $k$-modules and $k$-linear maps between them \cite{garner2020cartesian}. Essentially, this means that each hom-set is a $k$-module, so in particular, we have zero maps and can take the sum of maps, but also allow for maps which do not preserve zeroes or sums. Maps which do preserve the module structure are called $k$-linear maps. Cartesian left $\mathbb{N}$-linear categories are the same thing as Cartesian left additive categories \cite[Def 1.2.1]{blute2009cartesian}, and the $\mathbb{N}$-linear maps are precisely the additive maps \cite[Def 1.1.1]{blute2009cartesian}.

In an arbitrary category $\mathbb{X}$, we denote hom-sets by $\mathbb{X}(A,B)$, identity maps as $1_A: A \to A$, and we use the classical notation for composition, $\circ$, as opposed to diagrammatic order which was used in other papers on Cartesian differential categories, such as in \cite{blute2009cartesian}. For a category with finite products, we denote the product by $\times$, the projections as ${\pi_j: A_1 \times \hdots \times A_n \to A_j}$, the pairing operation as $\langle -, \hdots, - \rangle$, and we denote the terminal object as $\ast$, with the unique map to the terminal object as $\mathsf{t}_A: A \to \ast$.

\begin{definition} \label{LACdef} \cite[Sec 2.1]{garner2020cartesian} A \textbf{left $k$-linear category} is a category $\mathbb{X}$ such that each hom-set $\mathbb{X}(A,B)$ is a $k$-module with:
\begin{enumerate}[{\em (i)}]
\item Scalar multiplication $\cdot : k \times  \mathbb{X}(A,B) \to  \mathbb{X}(A,B)$;
\item Addition ${+: \mathbb{X}(A,B) \times \mathbb{X}(A,B) \to \mathbb{X}(A,B)}$;
\item Zero $0 \in \mathbb{X}(A,B)$;
\end{enumerate}
and such that pre-composition preserves the $k$-linear structure, that is, for suitable maps $f: A\to B$, $g: A \to B$, and $x: A^\prime \to A$, and all $r,s \in R$, the following equality holds: 
\begin{align}
   (r \cdot f + s \cdot g) \circ x = r \cdot (f \circ x) + s \cdot (g \circ x) 
\end{align}
A map $f: A\to B$ is said to be \textbf{$k$-linear} if post-composition by $f$ preserves the $k$-linear structure, that is, for all suitable maps $x: A^\prime \to A$ and $y: A^\prime \to A$, and for all $r,s \in R$, the following equality holds: 
\begin{align}
    f \circ (r \cdot x + s \cdot y) =   r \cdot (f \circ x) + s \cdot (f \circ y)
\end{align} 
A \textbf{Cartesian left $k$-linear category} is a left $k$-linear category $\mathbb{X}$ such that $\mathbb{X}$ has finite products\footnote{Note that in a Cartesian left $k$-linear category, the unique map to the terminal object is the zero map $0: A \to \ast$.} and all projection maps $\pi_j$ are $k$-linear maps. 
\end{definition}

Cartesian differential categories are Cartesian $k$-linear categories which come equipped with a differential combinator, which is an operator which sends maps to their derivative. The axioms of a differential combinator are analogues of the basic properties of the total derivative from multivariable differential calculus. There are various equivalent ways of stating the axioms of a Cartesian differential category. For this paper, we have chosen the one found in \cite[Def 2.6]{lemay2018tangent}, adapted for the $k$-linear setting. We also recall the definition of differential linear maps \cite[Def 2.7]{garner2020cartesian}, which are an important kind of map in a Cartesian differential category, and play a key role in the definition of Cartesian differential monads.

\begin{definition}\label{cartdiffdef} \cite[Sec 2.2]{garner2020cartesian} A \textbf{Cartesian $k$-differential category} is a Cartesian left $k$-linear category $\mathbb{X}$ equipped with a \textbf{differential combinator} $\mathsf{D}$, which is a family of functions $\mathsf{D}: \mathbb{X}(A,B) \to \mathbb{X}(A \times A,B)$, where for a $f: A \to B$, the map $\mathsf{D}[f]: A \times A \to B$ is called the \textbf{derivative} of $f$, and such that the following seven axioms hold:   
\begin{enumerate}[{\bf [CD.1]}]
\item \label{CDCax1} $\mathsf{D}[r\cdot f + s \cdot g] = r \cdot \mathsf{D}[f] + s \cdot \mathsf{D}[g]$
\item \label{CDCax2} $\mathsf{D}[f] \circ \langle \pi_1, r\cdot \pi_2 + s\cdot \pi_3 \rangle= r\cdot \left( \mathsf{D}[f] \circ \langle \pi_1, \pi_2 \rangle \right) + s \cdot \left( \mathsf{D}[f] \circ \langle \pi_1, \pi_2 \rangle \right)$ 
\item \label{CDCax3} $\mathsf{D}[1_A]=\pi_2$ and $\mathsf{D}[\pi_j] = \pi_{n+j}$ 
\item \label{CDCax4} $\mathsf{D}[\left\langle f_1, \hdots, f_n \right \rangle] = \left \langle  \mathsf{D}[f_1], \hdots, \mathsf{D}[f_n] \right \rangle$
\item \label{CDCax5} $\mathsf{D}[g \circ f] = \mathsf{D}[g] \circ \langle f \circ \pi_1, \mathsf{D}[f] \rangle$
\item \label{CDCax6} $\mathsf{D}\left[\mathsf{D}[f] \right] \circ \left \langle \pi_1, 0, 0, \pi_2 \right \rangle=  \mathsf{D}[f]$
\item \label{CDCax7} $\mathsf{D}\left[\mathsf{D}[f] \right] \circ \left \langle \pi_1, \pi_2, \pi_3, \pi_4 \right \rangle= \mathsf{D}\left[\mathsf{D}[f] \right] \circ \left \langle \pi_1, \pi_3, \pi_2, \pi_4 \right \rangle$
\end{enumerate}
\noindent 
\noindent A map $f: X \to Y$ is \textbf{differential linear}, or simply \textbf{$\mathsf{D}$-linear}, if $\mathsf{D}[f] = f \circ \pi_2$. 
\end{definition}

There is a sound and complete term logic for Cartesian differential categories \cite[Sec 4]{blute2009cartesian}, which is useful for intuition. So we write:
\[\mathsf{D}[f](a,b) := \frac{\mathsf{d}f(x)}{\mathsf{d}x}(a) \cdot b\] 
and the axioms of the differential combinator may be rewritten as follows: 
\begin{enumerate}[{\bf [CD.1]}]
\item $\frac{\mathsf{d}r \cdot f(x) + s \cdot g}{\mathsf{d}x}(a) \cdot b = r\cdot \frac{\mathsf{d}f(x)}{\mathsf{d}x}(a) \cdot b + s \cdot \frac{\mathsf{d}f(x)}{\mathsf{d}x}(a) \cdot b$
\item $\frac{\mathsf{d}f(x)}{\mathsf{d}x}(a) \cdot (r\cdot b + s \cdot c) = r\cdot \frac{\mathsf{d}f(x)}{\mathsf{d}x}(a) \cdot b + s \cdot \frac{\mathsf{d}f(x)}{\mathsf{d}x}(a) \cdot b$ 
\item  $\frac{\mathsf{d}x_i}{\mathsf{d}x_i}(a_1, \hdots, a_n) \cdot (b_1, \hdots, b_n) = b_i$ 
\item  $\frac{\mathsf{d}\left \langle f_1(x), \hdots, f_n \right \rangle}{\mathsf{d}x}(a) \cdot b = \left \langle \frac{\mathsf{d}f_1(x)}{\mathsf{d}x}(a) \cdot b, \hdots, \frac{\mathsf{d}f_n(x)}{\mathsf{d}x}(a) \cdot b \right \rangle$
\item $\frac{\mathsf{d}g\left(f(x) \right)}{\mathsf{d}x}(a) \cdot b = \frac{\mathsf{d}g(y)}{\mathsf{d}y}(f(a)) \cdot \left( \frac{\mathsf{d}f(x)}{\mathsf{d}x}(a) \cdot b \right)$
\item  $\frac{\mathsf{d}\frac{\mathsf{d}f(x)}{\mathsf{d}x}(y) \cdot z}{\mathsf{d}(y,z)}(a,0) \cdot (0,b) = \frac{\mathsf{d}f(x)}{\mathsf{d}x}(a) \cdot b$
\item $\frac{\mathsf{d}\frac{\mathsf{d}f(x)}{\mathsf{d}x}(y) \cdot z}{\mathsf{d}(y,z)}(a,b) \cdot (c,d) = \frac{\mathsf{d}\frac{\mathsf{d}f(x)}{\mathsf{d}x}(y) \cdot z}{\mathsf{d}(y,z)}(a,c) \cdot (b,d)$
\end{enumerate}
Briefly, the axioms of a differential combinator are that: \textbf{[CD.1]} the differential combinator is a $k$-linear morphism, \textbf{[CD.2]} derivatives are $k$-linear in their second argument, \textbf{[CD.3]} the derivative of identity maps and projections are projections, \textbf{[CD.4]} the derivative of a pairing is the pairing of the derivatives, \textbf{[CD.5]} the chain rule for the derivative of a composition, \textbf{[CD.6]} the derivative is differential linear in its second argument, and lastly \textbf{[CD.7]} is the symmetry of the partial derivatives. Furthermore, as explained in \cite[Lem 2.8]{lemay2018tangent}, it turns out that \textbf{[CD.4]} is in fact redundant and follows from \textbf{[CD.3]} and \textbf{[CD.5]}. For a differential linear map, its derivative is simply the starting map evaluated in the second argument:
\[\frac{\mathsf{d}f(x)}{\mathsf{d}x}(a) \cdot b = f(b)\] 
As it will be important below, it is worth mentioning now that every differential linear map is always $k$-linear \cite[Lem 2.2.2.(i)]{blute2009cartesian}. 

\begin{example} \label{ex:smooth} Arguably, the canonical example of a Cartesian differential category is the Lawvere theory of real smooth functions. So let $\mathbb{R}$ be the set of real numbers, and define $\mathsf{SMOOTH}$ to be the category whose objects are the Euclidean spaces $\mathbb{R}^n$ and whose maps are smooth functions between them. $\mathsf{SMOOTH}$ is a Cartesian $\mathbb{R}$-differential category where the differential combinator is defined as the total derivative of a smooth function. For a smooth function ${F: \mathbb{R}^n \to \mathbb{R}^m}$, which is in fact an $m$-tuple $F = \langle f_1, \hdots, f_m \rangle$ of smooth functions $f_i: \mathbb{R}^n \to \mathbb{R}$, the derivative ${\mathsf{D}[F]: \mathbb{R}^n \times \mathbb{R}^n \to \mathbb{R}^m}$ is defined as:
\[\mathsf{D}[F](\vec x, \vec y) := \left \langle \sum \limits^n_{i=1} \frac{\partial f_1}{\partial x_i}(\vec x) y_i, \hdots, \sum \limits^n_{i=1} \frac{\partial f_n}{\partial x_i}(\vec x) y_i \right \rangle\]
A smooth function $F: \mathbb{R}^n \to \mathbb{R}^m$ is $\mathsf{D}$-linear if and only if it is $\mathbb{R}$-linear in the classical sense. 
\end{example}

For a list of more examples of Cartesian differential categories, see \cite[Ex 2.5]{garner2020cartesian} 

\section{Cartesian Differential Monads}

In this section, we introduce Cartesian differential monads, which we will prove in the next section are monads which lift Cartesian differential structure to their Kleisli categories. If only to introduce notation, recall that a monad on a category $\mathbb{X}$ is a triple $\mathbb{S} := (\mathsf{S}, \mu, \eta)$ consisting of a functor ${\mathsf{S}: \mathbb{X} \to \mathbb{X}}$, and two natural transformations $\mu_A: \mathsf{S}\mathsf{S}(A) \to \mathsf{S}(A)$, called the multiplication, and ${\eta_A: A \to \mathsf{S}(A)}$, called the unit, and such that the following equalities hold: 
\begin{align}
    \mu_A \circ \eta_{\mathsf{S}(A)} = 1_{\mathsf{S}(A)} = \mu_A \circ \mathsf{S}(\eta_A) && \mu_A \circ \mu_{\mathsf{S}(A)} = \mu_A \circ \mathsf{S}(\mu_A)
\end{align}

To lift the Cartesian differential structure, we first need the ability to lift the Cartesian $k$-linear structure. Lifting Cartesian $k$-linear structure to coKleisli categories is automatic \cite[Prop 1.3.3]{blute2009cartesian}, in the sense that no extra assumption is needed on the comonad. The same is not true for Kleisli categories. Indeed, to lift products to the Kleisli category, one needs that the functor preserves products up to isomorphism, so $\mathsf{S}(A_1 \times \hdots \times A_n) \cong \mathsf{S}(A_1) \times \hdots \times \mathsf{S}(A_n)$ and $\mathsf{S}(\ast) \cong \ast$. To lift $k$-linear structure, one needs that the functor be a $k$-linear morphism between the hom-sets, and both the multiplication and the unit be $k$-linear maps. 

\begin{definition} Let $\mathbb{X}$ and $\mathbb{Y}$ be Cartesian left $k$-linear categories. A \textbf{strong Cartesian $k$-linear functor} is a functor $\mathsf{F}: \mathbb{X} \to \mathbb{Y}$ such that: 
\begin{enumerate}[{\em (i)}]
\item $\mathsf{F}$ preserves products up to isomorphism, that is, $\mathsf{t}_{\mathsf{F}(\ast)}: \mathsf{F}(\ast) \to \ast$ is an isomorphism, and the canonical natural transformations $\omega_{A_1, \hdots, A_n}: \mathsf{F}(A_1 \times \hdots \times A_n) \to \mathsf{F}(A_1) \times \hdots \times \mathsf{F}(A_n)$, defined as:
\begin{align}
    \omega_{A_1, \hdots, A_n} := \left \langle \mathsf{F}(\pi_1), \hdots, \mathsf{F}(\pi_n) \right \rangle
\end{align}
are natural isomorphisms. 
\item $\mathsf{F}$ is a $k$-linear morphism on hom-sets, that is, the following equality holds: 
\begin{align}
    \mathsf{F}(r \cdot f+ s \cdot g) = r \cdot \mathsf{F}(f) + s \cdot \mathsf{F}(g)
\end{align}
\end{enumerate}
A \textbf{strict Cartesian $k$-linear functor} is a strong Cartesian $k$-linear functor such that $\omega=1$, so in particular we have that $\mathsf{F}(\ast) = \ast$, $\mathsf{F}(A_1 \times \hdots \times A_n) = \mathsf{F}(A_1) \times \hdots \times \mathsf{F}(A_n)$, and $\mathsf{F}(\pi_j) = \pi_j$. 
\end{definition}

\begin{definition}\label{def:klinmonad} A \textbf{Cartesian $k$-linear monad} on a Cartesian left $k$-linear category $\mathbb{X}$ is a monad $\mathbb{S}$ such that the underlying functor $\mathsf{S}$ is a strong Cartesian $k$-linear functor, and  for all objects $A$, both $\eta_A$ and $\mu_A$ are $k$-linear maps. 
\end{definition}

It is also useful to note that Cartesian $k$-linear functors preserve $k$-linear maps: 

\begin{lemma}\label{lem:klinfunc}\cite[Lem 1.3.2]{blute2009cartesian} Let $\mathsf{F}: \mathbb{X} \to \mathbb{Y}$ be a strong Cartesian $k$-linear functor. Then $\mathsf{F}$ preserves $k$-linear maps, that is, if $f$ is a $k$-linear map in $\mathbb{X}$, then $\mathsf{F}(f)$ is a $k$-linear map in $\mathbb{Y}$. 
\end{lemma}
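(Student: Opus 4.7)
The plan is to internalise the module operations using the Cartesian structure, transport $k$-linearity across $\mathsf{F}$ at the level of these internal operations, and then re-externalise to arbitrary test maps. For each object $A$, set $+_A := \pi_1 + \pi_2 : A \times A \to A$ and $r \cdot 1_A : A \to A$. By left $k$-linearity of pre-composition, for all $u, v : C \to A$ one has $u + v = +_A \circ \langle u, v\rangle$ and $r \cdot u = (r \cdot 1_A) \circ u$. Consequently, a map $f : A \to B$ is $k$-linear if and only if both $f \circ +_A = +_B \circ (f \times f)$ and $f \circ (r \cdot 1_A) = (r \cdot 1_B) \circ f$ hold for every $r \in k$; the ``if'' direction is obtained by composing with $\langle u, v\rangle$ and $u$ respectively.

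The second step is to check that $\mathsf{F}$ carries these internal operations across, modulo the comparison $\omega$. Because $\mathsf{F}$ is a $k$-linear morphism on hom-sets and $\omega_{A,A} = \langle \mathsf{F}(\pi_1), \mathsf{F}(\pi_2)\rangle$, one computes
\[ \mathsf{F}(+_A) \,=\, \mathsf{F}(\pi_1) + \mathsf{F}(\pi_2) \,=\, \pi_1 \circ \omega_{A,A} + \pi_2 \circ \omega_{A,A} \,=\, +_{\mathsf{F}(A)} \circ \omega_{A,A}, \]
and likewise $\mathsf{F}(r \cdot 1_A) = r \cdot 1_{\mathsf{F}(A)}$. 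Naturality of $\omega$ applied to $f \times f$ gives $\omega_{B,B} \circ \mathsf{F}(f \times f) = (\mathsf{F}(f) \times \mathsf{F}(f)) \circ \omega_{A,A}$, which is the identity that lets us pass $\mathsf{F}(f)$ past a product comparison.

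Applying $\mathsf{F}$ to the two characterising identities for a $k$-linear $f$ and cancelling $\omega_{A,A}^{-1}$ and $\omega_{B,B}$ against each other then yields
\[ \mathsf{F}(f) \circ +_{\mathsf{F}(A)} = +_{\mathsf{F}(B)} \circ (\mathsf{F}(f) \times \mathsf{F}(f)), \qquad \mathsf{F}(f) \circ (r \cdot 1_{\mathsf{F}(A)}) = (r \cdot 1_{\mathsf{F}(B)}) \circ \mathsf{F}(f). \]
Composing these with arbitrary $\langle x, y\rangle : C \to \mathsf{F}(A) \times \mathsf{F}(A)$ and $x : C \to \mathsf{F}(A)$ respectively, and invoking left $k$-linearity of pre-composition once more, produces exactly the required equation $\mathsf{F}(f) \circ (r \cdot x + s \cdot y) = r \cdot (\mathsf{F}(f) \circ x) + s \cdot (\mathsf{F}(f) \circ y)$, so $\mathsf{F}(f)$ is $k$-linear. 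The only point needing care is the book-keeping with $\omega$ and $\omega^{-1}$ in the second step, which is precisely why the strong (rather than merely lax) product-preservation hypothesis on $\mathsf{F}$ is used.
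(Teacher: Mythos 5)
Your proof is correct. The paper gives no proof of this lemma --- it simply cites \cite[Lem 1.3.2]{blute2009cartesian} --- and your argument via the internalized operations $\pi_1+\pi_2$ and $r\cdot 1_A$, transported across $\mathsf{F}$ and the comparison isomorphism $\omega$, is essentially the standard proof found in that reference; the key observation that one cannot test $k$-linearity of $\mathsf{F}(f)$ directly against arbitrary maps of $\mathbb{Y}$ (which need not lie in the image of $\mathsf{F}$), and must therefore internalize first, is exactly right.
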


We may now properly define the notion of a Cartesian differential monad. Essentially, this is a monad whose monad structure is compatible with the differential combinator. So first we define Cartesian differential functors: 

\begin{definition}  Let $\mathbb{X}$ and $\mathbb{Y}$ be Cartesian $k$-differential categories. A \textbf{strong Cartesian $k$-differential functor} is a strong (resp. strict) Cartesian $k$-linear functor $\mathsf{F}: \mathbb{X} \to \mathbb{Y}$ such that for all maps ${f: A \to B}$, the following equality holds:
\begin{align} 
\mathsf{D}[\mathsf{F}(f)] = \mathsf{F}(\mathsf{D}[f]) \circ \omega^{-1}_{A,A}\label{SD}
\end{align}
Similarly, a \textbf{strict Cartesian $k$-differential functor} is a strict Cartesian $k$-linear functor $\mathsf{F}: \mathbb{X} \to \mathbb{Y}$ such that for all maps ${f: A \to B}$, $\mathsf{D}[\mathsf{F}(f)] = \mathsf{F}(\mathsf{D}[f])$. 
\end{definition}

We note that Cartesian differential functors preserve differential linear maps: 

\begin{lemma}\label{lem:dlinfunc} Let $\mathsf{F}: \mathbb{X} \to \mathbb{Y}$ be a strong Cartesian $k$-differential functor. If $f: A \to B$ is $\mathsf{D}$-linear in $\mathbb{X}$, then $\mathsf{S}(f): \mathsf{S}(A) \to \mathsf{S}(B)$ is also $\mathsf{D}$-linear in $\mathbb{Y}$. 
\end{lemma}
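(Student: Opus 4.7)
The plan is to unfold both definitions and chase the equation through the $\omega$-isomorphism. Assuming $f : A \to B$ is $\mathsf{D}$-linear in $\mathbb{X}$, we have $\mathsf{D}[f] = f \circ \pi_2$, and we want to establish that $\mathsf{F}(f)$ satisfies $\mathsf{D}[\mathsf{F}(f)] = \mathsf{F}(f) \circ \pi_2$ in $\mathbb{Y}$.

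First, I would apply the strong Cartesian differential functor condition (\ref{SD}) to rewrite $\mathsf{D}[\mathsf{F}(f)] = \mathsf{F}(\mathsf{D}[f]) \circ \omega^{-1}_{A,A}$. Substituting $\mathsf{D}[f] = f \circ \pi_2$ and using functoriality yields $\mathsf{D}[\mathsf{F}(f)] = \mathsf{F}(f) \circ \mathsf{F}(\pi_2) \circ \omega^{-1}_{A,A}$. So the whole question reduces to checking that $\mathsf{F}(\pi_2) \circ \omega^{-1}_{A,A} = \pi_2$.

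This is immediate from the very definition of $\omega$. Since $\omega_{A,A} = \langle \mathsf{F}(\pi_1), \mathsf{F}(\pi_2) \rangle$, we have $\pi_j \circ \omega_{A,A} = \mathsf{F}(\pi_j)$ for $j = 1,2$, and post-composing with $\omega^{-1}_{A,A}$ gives $\mathsf{F}(\pi_j) \circ \omega^{-1}_{A,A} = \pi_j$. Plugging this back produces $\mathsf{D}[\mathsf{F}(f)] = \mathsf{F}(f) \circ \pi_2$, as required.

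There is essentially no obstacle here: the proof is a two-line diagram chase, and the only subtle point is making sure one interprets $\omega$ correctly as being built from $\mathsf{F}(\pi_j)$ rather than the other way round, so that the projections in $\mathbb{Y}$ come out on the outside after inverting. (Note that the statement as displayed writes $\mathsf{S}(f)$, which appears to be a typographical slip for $\mathsf{F}(f)$; the proof goes through verbatim in either reading.) For the strict case, one has $\omega = 1$ and the conclusion is a direct consequence of the strict condition $\mathsf{D}[\mathsf{F}(f)] = \mathsf{F}(\mathsf{D}[f])$ together with $\mathsf{F}(\pi_2) = \pi_2$.
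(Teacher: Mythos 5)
Your proof is correct and follows exactly the same route as the paper's: apply (\ref{SD}), use functoriality to pull $\mathsf{F}(f)$ out, and reduce to the identity $\mathsf{F}(\pi_2)\circ\omega^{-1}_{A,A}=\pi_2$, which the paper asserts without the justification you supply. You are also right that the $\mathsf{S}(f)$ in the statement is a typographical slip for $\mathsf{F}(f)$.
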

\begin{proof} First note that $\mathsf{F}(\pi_2) \circ \omega^{-1}_{A,A} = \pi_2$. So if $f$ is $\mathsf{D}$-linear, by using (\ref{SD}) we easily compute that: 
\begin{gather*}
\mathsf{D}[\mathsf{F}(f)] = \mathsf{F}(\mathsf{D}[f]) \circ \omega^{-1}_{A,A} = \mathsf{F}(f) \circ \mathsf{F}(\pi_2) \circ \omega^{-1}_{A,A} = \mathsf{F}(f) \circ \pi_2
\end{gather*}
    So we conclude that $\mathsf{F}(f)$ is also $\mathsf{D}$-linear. 
\end{proof}

We now define the main novel concept of this paper: 

\begin{definition}\label{def:CDM} A \textbf{Cartesian $k$-differential monad} on a Cartesian $k$-differential category $\mathbb{X}$ is a monad $\mathbb{S}$ such that the underlying functor $\mathsf{S}$ is a strong Cartesian $k$-differential functor, and for all objects $A$, both $\eta_A$ and $\mu_A$ are $\mathsf{D}$-linear maps.
\end{definition}

As differential linear maps are always $k$-linear maps, a Cartesian $k$-differential monad is also a Cartesian $k$-linear monad. From a higher category theory point of view, Cartesian differential monads are precisely monads in the (strict) 2-category \cite{street1972formal} of Cartesian differential categories, strong cartesian differential functors, and differential linear natural transformations (by which we mean a natural transformation that is differential linear at each component). Here are now some examples of Cartesian differential monads. 

\begin{example} Every Cartesian $k$-differential category always admits two trivial Cartesian $k$-differential monads: the identity monad and the constant monad that sends every object to the terminal object. 
\end{example}

\begin{example}\label{ex:tanbun} Every Cartesian differential category always has yet another Cartesian differential monad provided by its canonical tangent bundle monad. As the name suggests, this monad arises from the fact that every Cartesian differential category is a tangent category (which we review in Sec \ref{sec:tancats}), and that the tangent bundle functor of a tangent category always has a canonical monad structure \cite[Prop 3.4]{cockett2014differential}. In \cite{manzyuk2012tangent}, Manzyuk studied the tangent bundle monad of categorical models of the differential $\lambda$-calculus. In \cite{alvarez2020cartesian,lemay2021cartesian}, Alvarez-Picallo and the author studied the tangent bundle monad for Cartesian \emph{difference} categories, a slight generalization of Cartesian differential categories by adding extra nilpotent structure. So for a Cartesian $k$-differential category $\mathbb{X}$, its \textbf{tangent bundle monad} \cite[Sec 6.1]{lemay2021cartesian} is the monad $\mathbb{T} := (\mathsf{T}, \mu, \eta)$ which is defined as follows: 
\begin{gather*}
    \mathsf{T}(A) = A \times A \qquad \mathsf{T}(f) = \left \langle f \circ \pi_1, \mathsf{D}[f] \right \rangle \\
    \mu_A = \left \langle \pi_1, \pi_2 + \pi_3 \right \rangle \qquad \eta_A = \langle 1_A, 0 \rangle 
\end{gather*}
In term calculus notation, the tangent bundle on maps is given by:
\[\mathsf{T}(f)(a,b) = \left(f(a), \frac{\mathsf{d}f(x)}{\mathsf{d}x}(a) \cdot b \right)\] 
All the necessary requirements that need to be checked for the tangent bundle to be a Cartesian $k$-differential monad can be found in \cite[Lem 6.4]{lemay2021cartesian}. In particular, in the term calculus, (\ref{SD}) is expressed as the equality: 
\[\frac{\mathsf{d} \left( f(x), \frac{\mathsf{d}f(u)}{\mathsf{d}u}(x) \cdot y \right)}{\mathsf{d}(x,y)}(a,b) \cdot (c,d) = \left( \frac{\mathsf{d}f(x)}{\mathsf{d}x}(a) \cdot c, \frac{\mathsf{d} \frac{\mathsf{d}f(u)}{\mathsf{d}u}(x) \cdot y }{\mathsf{d}(x,y)}(a,c) \cdot (b,d) \right)\]
which the reader can check for themselves using the term calculus as a good exercise. 
\end{example}

\begin{example} It may also be useful to consider a concrete example of the tangent bundle monad. So consider the Cartesian $\mathbb{R}$-differential category $\mathsf{SMOOTH}$ from Ex \ref{ex:smooth}. Then the resulting tangent bundle monad $(\mathsf{T}, \mu, \eta)$ is given as follows (for all $\vec x, \vec y, \vec z, \vec w \in \mathbb{R}^n$): 
\begin{gather*}
    \mathsf{T}(\mathbb{R}^n) = \mathbb{R}^n \times \mathbb{R}^n \qquad \mathsf{T}(F)(\vec x, \vec y) = \left( F(\vec x), \mathsf{D}[F](\vec x, \vec y) \right) \\
    \mu_{\mathbb{R}^n}(\vec x, \vec y, \vec z, \vec w) = (\vec x, \vec y + \vec z) \qquad \eta_{\mathbb{R}^n}(\vec x) = ( \vec x, \vec 0 )
\end{gather*}
Let us work out what (\ref{SD}) states for a smooth function ${f: \mathbb{R} \to \mathbb{R}}$. On the one hand:
\[\mathsf{D}[f](x,y) = f^\prime(x)y\] 
where $f^\prime$ is the derivative of $f$, and so applying $\mathsf{T}$ we get:
\[\mathsf{T}(\mathsf{D}[f])(x,y,z,w) = (f^\prime(x) y, f^{\prime \prime}(x)yz + f^\prime(x) w)\]
On the other hand, first applying $\mathsf{T}$ we get: 
\[\mathsf{T}(f)(x,y) = \left( f(x), f^\prime(x)y \right)\] 
and so taking its derivative we then get: 
\[\mathsf{D}[\mathsf{T}(f)](x, y, z,w) = \left(f^\prime(x)z, f^{\prime\prime}(x)yz + f^\prime(x)w  \right)\] 
Thus, we see that:
\[\mathsf{D}[\mathsf{T}(f)](x, y, z,w) = \mathsf{T}(\mathsf{D}[f])(x,z,y,w)\] 
where note the swapping of the middle two arguments. 
\end{example}

\begin{example}\label{ex:reader} In a Cartesian \emph{closed} differential category, reader monads are Cartesian differential monads. Briefly, a Cartesian closed $k$-differential category \cite[Def 2.9]{garner2020cartesian} is a Cartesian $k$-differential category $\mathbb{X}$ which is also Cartesian closed such that differential combinator is compatible with the closed structure. In the term calculus, this is expressed as:
\[\frac{\mathsf{d}\lambda y.f(y,x)}{\mathsf{d}x}(a) \cdot b = \lambda y. \frac{\mathsf{d}f(y,x)}{\mathsf{d}x}(a) \cdot b\] 
which says that the derivative of a Curry of a map is the Curry of the partial derivative of said map. Every model of the differential $\lambda$-calculus \cite{ehrhard2003differential} induces a Cartesian closed differential category, and conversely, every Cartesian closed differential category gives rise to a model of the differential $\lambda$-calculus. Now let $[-,-]$ be the internal hom. Then every object $C$ induces a monad $\mathbb{R}(C) := \left( [C,-], \mu^C, \eta^C \right)$, called the reader monad, and is defined as follows (using the term calculus): 
\begin{gather*}
[C,-](A) := [C,A] \quad \quad \quad [C,-](f)\left( g(-) \right) = [C,f]\left( g(-) \right)  = \lambda x. f(g(x)) \\
\mu^C_A\left( F(-)(-) \right)  = \lambda x. F(x)(x) \quad \quad \quad \eta^C_A(a) = \lambda x. a  
\end{gather*}
It is well-known that $[C, A \times B] \cong [C,A] \times [C,B]$, and it is easy to check that, by  the left $k$-linear structure, we have that:
\[ [C, r\cdot f + s \cdot g] = r \cdot [C,f] + s \cdot [C, g]\] 
Furthermore, we also have that:
\[\frac{\mathsf{d}\lambda y.f(k(-))}{\mathsf{d}\left( k(-) \right) }\left( g(-) \right)  \cdot h(-)   = \lambda y. \frac{\mathsf{d}f(x)}{\mathsf{d}x}(g(y)) \cdot h(y)\] 
So we conclude that $\mathbb{R}(C)$ is a Cartesian $k$-differential monad. 
\end{example}

\section{Kleisli Categories of Cartesian Differential Monads}

In this section, we will now prove that the Kleisli category of a Cartesian differential monad is a Cartesian differential category. As we will be working with Kleisli categories, we will use notation inspired by \cite{blute2015cartesian} for Kleisli categories, and use interpretation brackets $\llbracket - \rrbracket$ to help distinguish between composition in the base category and Kleisli composition. So for a monad $\mathbb{S}$ on a category $\mathbb{X}$, let $\mathsf{KL}(\mathbb{S})$ denote its Kleisli category, which recall is the category whose objects are the same as $\mathbb{X}$ and where a map $f: A \to B$ in the Kleisli category is a map of type $\llbracket f \rrbracket: A \to \mathsf{S}(B)$ in the base category, that is, $\mathsf{KL}(\mathbb{S})(A,B) = \mathbb{X}(A, \mathsf{S}(B))$. The identity maps $\llbracket 1_A \rrbracket: A \to \mathsf{S}(A)$ in the Kleisli category is given by the monad unit, so: 
\begin{align}
    \llbracket 1_A \rrbracket  := \eta_A
\end{align}
Composition of Kleisli maps ${\llbracket f \rrbracket: A \to \mathsf{S}(B)}$ and $\llbracket g \rrbracket: B \to \mathsf{S}(C)$ is defined as:
\begin{align}
    \llbracket g \circ f \rrbracket := \mu_C \circ \mathsf{S}\left( \llbracket g \rrbracket \right) \circ \llbracket f \rrbracket
\end{align}
Let $\mathsf{L}_\mathbb{S}: \mathbb{X} \to \mathsf{KL}(\mathbb{S})$ be the canonical inclusion functor which is defined on objects as $\mathsf{L}_\mathbb{S}(A)=A$ and on maps ${f: A \to B}$ as: 
\begin{align}
    \llbracket \mathsf{L}_\mathbb{S}(f) \rrbracket := \eta_B \circ f
\end{align}
Also let $\mathsf{R}_\mathbb{S}: \mathsf{KL}(\mathbb{S}) \to \mathbb{X}$ be the right adjoint of $\mathsf{L}_\mathbb{S}$, which is defined on objects as $\mathsf{R}_\mathbb{S}(A) = \mathsf{S}(A)$ and on a Kleisli map $\llbracket f \rrbracket: A \to \mathsf{S}(B)$ as:
\begin{align}
    \mathsf{R}_\mathbb{S}(f) = \mu_B \circ \mathsf{S}\left( \llbracket f \rrbracket \right)
\end{align} 

We begin by explaining how for a Cartesian $k$-linear monad, its Kleisli category is a Cartesian left $k$-linear category, whose structure is lifted from the base category. 

\begin{lemma}\label{lem:KleisliCLAC} Let $\mathbb{S}$ be a Cartesian $k$-linear monad on a Cartesian left $k$-linear category $\mathbb{X}$. Then the Kleisli category $\mathsf{KL}(\mathbb{S})$ is a Cartesian left $k$-linear category where: 
\begin{enumerate}[{\em (i)}]
\item The terminal object and product on objects are defined as in $\mathbb{X}$
\item Projections are defined as $\llbracket  \pi_j \rrbracket = \eta_{A_j} \circ \pi_j$
\item The pairing is defined as $\llbracket \langle f_1, \hdots, f_n  \rangle \rrbracket = \omega^{-1}_{A_1,\hdots,A_n} \circ \left \langle \llbracket f_1 \rrbracket, \hdots, \llbracket f_n \rrbracket \right \rangle$;
\item The $k$-module structure on $\mathsf{KL}(\mathbb{S})(A,B)$ is the same as $\mathbb{X}(A,\mathsf{S}(A))$, that is, given as follows: 
\begin{enumerate}
\item The scalar multiplication by $\llbracket r \cdot f \rrbracket = r \cdot \llbracket f \rrbracket$;
\item The addition by $\llbracket  f+g \rrbracket = \llbracket f \rrbracket + \llbracket g \rrbracket$;
\item Zero maps by $\llbracket  0 \rrbracket = 0$.
\end{enumerate}
\end{enumerate}
    Furthermore, $\mathsf{L}_\mathbb{S}: \mathbb{X} \to \mathsf{KL}(\mathbb{S})$ is a strict Cartesian $k$-linear functor and $\mathsf{R}_\mathbb{S}: \mathsf{KL}(\mathbb{S}) \to \mathbb{X}$ is a strong Cartesian $k$-linear functor. 
\end{lemma}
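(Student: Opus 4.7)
The plan is to verify each piece of Cartesian left $k$-linear structure on $\mathsf{KL}(\mathbb{S})$ by unfolding the interpretation brackets and reducing to a computation in $\mathbb{X}$, where the hypotheses on $\mathsf{S}$, $\mu$, and $\eta$ do the work. The single technical identity I will reuse throughout is that Kleisli post-composition with a projection collapses to ordinary post-composition with $\mathsf{S}(\pi_j)$:
\begin{align*}
\llbracket \pi_j \circ h \rrbracket = \mu_{A_j} \circ \mathsf{S}(\eta_{A_j} \circ \pi_j) \circ \llbracket h \rrbracket = \mu_{A_j} \circ \mathsf{S}(\eta_{A_j}) \circ \mathsf{S}(\pi_j) \circ \llbracket h \rrbracket = \mathsf{S}(\pi_j) \circ \llbracket h \rrbracket
\end{align*}
by the monad unit axiom, combined with the identity $\mathsf{S}(\pi_j) = \pi_j \circ \omega_{A_1,\ldots,A_n}$, which is immediate from the definition of $\omega$.

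First, the $k$-module structure on $\mathsf{KL}(\mathbb{S})(A,B) = \mathbb{X}(A,\mathsf{S}(B))$ is inherited directly from $\mathbb{X}$, and I would check that Kleisli pre-composition preserves it by unfolding and applying in sequence: $\mathsf{S}$ being $k$-linear on hom-sets, $\mu_B$ being $k$-linear (so post-composition by $\mu_B$ respects sums and scalars), and pre-composition in $\mathbb{X}$ being $k$-linear. For the Cartesian structure, $\mathsf{S}(\ast) \cong \ast$ forces $\mathbb{X}(A,\mathsf{S}(\ast))$ to be a singleton, so $\ast$ is terminal in $\mathsf{KL}(\mathbb{S})$. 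For finite products, the key identity above shows that the condition $\llbracket \pi_j \circ h \rrbracket = \llbracket f_j \rrbracket$ is equivalent to $\pi_j \circ \omega \circ \llbracket h \rrbracket = \llbracket f_j \rrbracket$, which by the universal property of the $\mathbb{X}$-product has the unique solution $\llbracket h \rrbracket = \omega^{-1} \circ \langle \llbracket f_1 \rrbracket, \ldots, \llbracket f_n \rrbracket \rangle$; this is exactly the proposed pairing. That each Kleisli projection is itself $k$-linear in $\mathsf{KL}(\mathbb{S})$ reduces via the same identity to $\mathsf{S}(\pi_j)$ being $k$-linear in $\mathbb{X}$, which follows from Lemma \ref{lem:klinfunc} applied to the $k$-linear map $\pi_j$.

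Finally, for the adjoint pair: $\mathsf{L}_\mathbb{S}$ preserves products on the nose by construction and sends $\pi_j$ in $\mathbb{X}$ to $\eta_{A_j} \circ \pi_j = \llbracket \pi_j \rrbracket$, so it is strict, while its $k$-linearity on hom-sets is immediate from $\eta_B$ being $k$-linear. For $\mathsf{R}_\mathbb{S}$, a short calculation gives $\mathsf{R}_\mathbb{S}(\llbracket \pi_j \rrbracket) = \mu_{A_j} \circ \mathsf{S}(\eta_{A_j} \circ \pi_j) = \mathsf{S}(\pi_j)$, so the comparison natural transformation into the product is precisely $\omega$ and hence an isomorphism, and the analogous argument makes $\mathsf{t}_{\mathsf{S}(\ast)}$ an iso by hypothesis; $k$-linearity on hom-sets follows once more from $\mathsf{S}$ being $k$-linear and $\mu_B$ being $k$-linear. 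The main subtlety is keeping clear the distinction between the Kleisli product $A_1 \times \cdots \times A_n$, whose underlying carrier sits in $\mathbb{X}$, and the $\mathbb{X}$-product $\mathsf{S}(A_1) \times \cdots \times \mathsf{S}(A_n)$, with $\omega$ as the bridge; once that bookkeeping is in place, every verification reduces to a short diagram chase.
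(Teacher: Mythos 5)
Your proof is correct and follows essentially the same route as the paper: the core computation showing Kleisli pre-composition preserves the $k$-linear structure (via $\mathsf{S}$ being $k$-linear on hom-sets and $\mu$ being a $k$-linear map) is identical, as is the treatment of projections and of $\mathsf{L}_\mathbb{S}$ and $\mathsf{R}_\mathbb{S}$. The only difference is that where the paper cites the product-lifting as folklore, you verify it directly via the identity $\llbracket \pi_j \circ h \rrbracket = \mathsf{S}(\pi_j) \circ \llbracket h \rrbracket = \pi_j \circ \omega \circ \llbracket h \rrbracket$, which is a welcome filling-in of detail rather than a different argument.
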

\begin{proof} It is known (though seemingly folkloric), that if the functor of a monad preserves products up to isomorphism, then the products lift to the Kleisli category. Next, we must show that pre-composition in the Kleisli category preserves the $k$-linear structure. This follows from the fact that $\mathsf{S}$ preserve $k$-linearity and $\mu$ is a $k$-linear map:
\begin{gather*}
\llbracket (r \cdot f + s \cdot g) \circ x \rrbracket = \mu_C \circ \mathsf{S}\left(\llbracket (r \cdot f + s \cdot g) \rrbracket \right) \circ \llbracket x \rrbracket = \mu_C \circ \mathsf{S}\left( r \cdot \llbracket f \rrbracket + s \cdot \llbracket  g \rrbracket \right) \circ \llbracket x \rrbracket\\
= \mu_C \circ \left(r \cdot  \mathsf{S}\left( \llbracket f \rrbracket \right) + s \cdot  \mathsf{S}\left( \llbracket g \rrbracket \right) \right) \circ \llbracket x \rrbracket = \left( r\cdot \left( \mu_C \circ  \mathsf{S}\left( \llbracket f \rrbracket \right) \right) + s\cdot \left( \mu_C \circ  \mathsf{S}\left( \llbracket g \rrbracket \right) \right) \right) \circ \llbracket x \rrbracket \\= r\cdot \left( \mu_C \circ  \mathsf{S}\left( \llbracket f \rrbracket \right) \circ \llbracket x \rrbracket \right)  + s\cdot \left( \mu_C \circ  \mathsf{S}\left( \llbracket g \rrbracket \right) \circ \llbracket x \rrbracket \right) = r\cdot \llbracket f \circ x \rrbracket + s \cdot \llbracket g \circ x \rrbracket = \llbracket r \cdot (f \circ x) + s \cdot (g \circ x) \rrbracket 
\end{gather*}
So the Kleisli category is indeed a left $k$-linear category. Lastly, since the projections $\pi_j$ are $k$-linear in $\mathbb{X}$, and that $\llbracket \pi_j \rrbracket = \llbracket \mathsf{L}_\mathbb{S}(\pi_j) \rrbracket$, it is straightforward to check that $\llbracket \pi_j \rrbracket$ are also $k$-linear in $\mathsf{KL}(\mathbb{S})$. So we conclude that the Kleisli category is a Cartesian left $k$-linear category as desired. Since $\mathsf{S}$ preserves the $k$-linear structure, and both $\eta$ and $\mu$ are $k$-linear maps, it is clear to see that $\mathsf{L}_\mathbb{S}$ and $\mathsf{R}_\mathbb{S}$ are strict and strong Cartesian $k$-linear functors respectively. 
\end{proof}

We now state the main result of this paper that the Kleisli category of a Cartesian differential monad is a Cartesian differential category, by way of lifting the differential combinator to the Kleisli category. Furthermore, a map is differential linear in the Kleisli category if and only if it is differential linear in the base category. 

\begin{theorem}\label{thm:CDMKleisli} Let $\mathbb{S}$ be a Cartesian $k$-differential monad on a Cartesian $k$-differential category $\mathbb{X}$ with differential combinator $\mathsf{D}$. Then:
\begin{enumerate}[{\em (i)}]
\item The Kleisli category $\mathsf{KL}(\mathbb{S})$ is a Cartesian $k$-differential category where the Cartesian left $k$-linear structure is defined as in Lemma \ref{lem:KleisliCLAC} and the differential combinator $\mathsf{D}_\mathbb{S}$ is defined on a Kleisli map $\llbracket f \rrbracket: A \to \mathsf{S}(B)$ as follows:
\begin{align}
    \llbracket \mathsf{D}_\mathbb{S}[f] \rrbracket := \mathsf{D}\left[ \llbracket f \rrbracket \right]: A \times A \to \mathsf{S}(B)
\end{align}
\item A map $f$ in $\mathsf{KL}(\mathbb{S})$ is $\mathsf{D}_\mathbb{S}$-linear if and only if $\llbracket f \rrbracket$ is $\mathsf{D}$-linear in $\mathbb{X}$. 
\item ${\mathsf{L}_\mathbb{S}: \mathbb{X} \to \mathsf{KL}(\mathbb{S})}$ is a strict Cartesian $k$-differential functor and $\mathsf{R}_\mathbb{S}: \mathsf{KL}(\mathbb{S}) \to \mathbb{X}$ is a strong Cartesian $k$-differential functor. 
\end{enumerate}
\end{theorem}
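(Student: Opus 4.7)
The plan is, for part (i), to verify each of the seven axioms \textbf{[CD.1]}--\textbf{[CD.7]} for $\mathsf{D}_\mathbb{S}$, since Lemma \ref{lem:KleisliCLAC} already supplies the Cartesian left $k$-linear structure on $\mathsf{KL}(\mathbb{S})$. The guiding observation I would establish first is the \emph{collapsing identity} $\llbracket g \circ \mathsf{L}_\mathbb{S}(h) \rrbracket = \llbracket g \rrbracket \circ h$, which follows immediately from naturality of $\eta$ and the unit law $\mu \circ \mathsf{S}(\eta) = 1$. Because the Kleisli projections, identity maps, zero maps, and pairings-of-projections appearing inside \textbf{[CD.2]}, \textbf{[CD.3]}, \textbf{[CD.6]}, and \textbf{[CD.7]} are all images under $\mathsf{L}_\mathbb{S}$ of the corresponding base-category maps, these four axioms reduce mechanically to their counterparts for $\mathsf{D}$ in $\mathbb{X}$. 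Axiom \textbf{[CD.1]} is immediate since the Kleisli $k$-module structure is inherited from $\mathbb{X}(A, \mathsf{S}(B))$ and $\mathsf{D}$ is $k$-linear on hom-sets in $\mathbb{X}$; and \textbf{[CD.3]} additionally uses $\mathsf{D}[\eta_A] = \eta_A \circ \pi_2$, which is $\mathsf{D}$-linearity of $\eta$. Axiom \textbf{[CD.4]} need not be checked separately, as noted following Definition \ref{cartdiffdef}.

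The main obstacle will be the chain rule \textbf{[CD.5]}. Unfolding the left-hand side gives $\llbracket \mathsf{D}_\mathbb{S}[g \circ f] \rrbracket = \mathsf{D}\bigl[\mu_C \circ \mathsf{S}(\llbracket g \rrbracket) \circ \llbracket f \rrbracket\bigr]$, and I would then apply three ingredients in sequence: $\mathsf{D}$-linearity of $\mu$, which combined with the chain rule in $\mathbb{X}$ pulls $\mu$ outside of $\mathsf{D}$; the Cartesian differential functor equation~(\ref{SD}), rewriting $\mathsf{D}[\mathsf{S}(\llbracket g \rrbracket)]$ as $\mathsf{S}(\mathsf{D}[\llbracket g \rrbracket]) \circ \omega^{-1}_{B,B}$; and one more use of \textbf{[CD.5]} in $\mathbb{X}$ applied to $\mathsf{S}(\llbracket g \rrbracket) \circ \llbracket f \rrbracket$. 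Unfolding the right-hand side $\llbracket \mathsf{D}_\mathbb{S}[g] \circ \langle f \circ \pi_1, \mathsf{D}_\mathbb{S}[f] \rangle \rrbracket$ in the same spirit — using the collapsing identity to see $\llbracket f \circ \pi_1 \rrbracket = \llbracket f \rrbracket \circ \pi_1$, while the Kleisli pairing introduces precisely the $\omega^{-1}_{B,B}$ appearing on the left — both sides will collapse to $\mu_C \circ \mathsf{S}(\mathsf{D}[\llbracket g \rrbracket]) \circ \omega^{-1}_{B,B} \circ \langle \llbracket f \rrbracket \circ \pi_1, \mathsf{D}[\llbracket f \rrbracket] \rangle$. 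This is the step where all three hypotheses of a Cartesian $k$-differential monad (on $\mathsf{S}$, on $\mu$, and on $\eta$) are simultaneously essential.

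Parts (ii) and (iii) then fall out by direct unpacking. For (ii), $f$ is $\mathsf{D}_\mathbb{S}$-linear iff $\llbracket \mathsf{D}_\mathbb{S}[f] \rrbracket = \llbracket f \circ \pi_2 \rrbracket$, and the collapsing identity rewrites the right-hand side as $\llbracket f \rrbracket \circ \pi_2$, so this is precisely $\mathsf{D}$-linearity of $\llbracket f \rrbracket$ in $\mathbb{X}$. For (iii), $\mathsf{L}_\mathbb{S}$ and $\mathsf{R}_\mathbb{S}$ are already respectively strict and strong Cartesian $k$-linear by Lemma \ref{lem:KleisliCLAC}; at the differential level, $\mathsf{D}$-linearity of $\eta$ together with \textbf{[CD.5]} in $\mathbb{X}$ yields $\mathsf{D}[\eta_B \circ f] = \eta_B \circ \mathsf{D}[f]$, hence $\mathsf{D}_\mathbb{S}[\mathsf{L}_\mathbb{S}(f)] = \mathsf{L}_\mathbb{S}(\mathsf{D}[f])$, while $\mathsf{D}$-linearity of $\mu$ together with (\ref{SD}) gives $\mathsf{D}[\mathsf{R}_\mathbb{S}(f)] = \mu_B \circ \mathsf{S}(\mathsf{D}[\llbracket f \rrbracket]) \circ \omega^{-1}_{A,A} = \mathsf{R}_\mathbb{S}(\mathsf{D}_\mathbb{S}[f]) \circ \omega^{-1}_{A,A}$, which is the strong Cartesian $k$-differential functor condition.
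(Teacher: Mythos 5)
Your proposal is correct and follows essentially the same route as the paper's proof: the collapsing identity $\llbracket g \circ \mathsf{L}_\mathbb{S}(h) \rrbracket = \llbracket g \rrbracket \circ h$ to reduce \textbf{[CD.2]}, \textbf{[CD.3]}, \textbf{[CD.6]}, \textbf{[CD.7]} to the base category, the combination of $\mathsf{D}$-linearity of $\mu$, equation~(\ref{SD}), and the base chain rule for \textbf{[CD.5]}, the omission of \textbf{[CD.4]} as redundant, and the same unpacking for (ii) and (iii). No gaps.
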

\begin{proof} \textbf{[CD.1]} is straightforward to check. To prove \textbf{[CD.2]}, \textbf{[CD.6]}, and \textbf{[CD.7]}, we use the fact that $\mathsf{L}_\mathbb{S}$ preserves the Cartesian left $k$-linear structure strictly and that $\llbracket g \circ \mathsf{F}_{\mathsf{S}}(f) \rrbracket = \llbracket g \rrbracket \circ f$. So for \textbf{[CD.6]} we compute: 
\begin{gather*}
    \llbracket \mathsf{D}_\mathbb{S}\left[ \mathsf{D}_\mathbb{S}[f] \right] \circ \langle \pi_1, 0, 0, \pi_2 \rangle \rrbracket =    \llbracket \mathsf{D}_\mathbb{S}\left[ \mathsf{D}_\mathbb{S}[f] \right] \circ \mathsf{L}_\mathbb{S}\left( \langle \pi_1, 0, 0, \pi_2 \rangle \right) \rrbracket = \llbracket \mathsf{D}_\mathbb{S}\left[ \mathsf{D}_\mathbb{S}[f] \right] \rrbracket \circ  \langle \pi_1, 0, 0, \pi_2 \rangle \\
    = \mathsf{D}\left[ \llbracket \mathsf{D}_\mathbb{S}[f]  \rrbracket \right] \circ \langle \pi_1, 0, 0, \pi_2 \rangle = \mathsf{D}\left[ \mathsf{D}\left[ \llbracket f \rrbracket \right] \right] \circ \langle \pi_1, 0, 0, \pi_2 \rangle = \mathsf{D}\left[ \llbracket f \rrbracket \right] = \llbracket \mathsf{D}_\mathbb{S}[f] \rrbracket
\end{gather*}
\textbf{[CD.2]} and \textbf{[CD.7]} are shown in a similar fashion. For \textbf{[CD.3]} and \textbf{[CD.5]}, we use the fact that if $g$ is $\mathsf{D}$-linear, then $\mathsf{D}[g \circ f] = g \circ \mathsf{D}[f]$ \cite[Lem 2.2.2.(vi)]{blute2009cartesian}. Then \textbf{[CD.3]} follows from the fact that $\eta$ is $\mathsf{D}$-linear:
\begin{gather*}
    \llbracket \mathsf{D}_\mathbb{S}[1_A] \rrbracket = \mathsf{D}\left[ \llbracket 1_A \rrbracket \right] = \mathsf{D}[\eta_A] = \eta_A \circ \pi_2 = \llbracket \pi_2 \rrbracket \\
    \llbracket \mathsf{D}_\mathbb{S}[\pi_j] \rrbracket = \mathsf{D}\left[ \llbracket \pi_j \rrbracket \right] = \mathsf{D}[\eta_{A_j} \circ \pi_j] = \eta_{A_j} \circ \mathsf{D}[\pi_j] = \eta_A \circ \pi_{n+j} = \llbracket \pi_{n+j} \rrbracket
\end{gather*}
For \textbf{[CD.5]} we use that $\mu$ is $\mathsf{D}$-linear and (\ref{SD}):
\begin{gather*}
    \llbracket \mathsf{D}_\mathbb{S}[g \circ f] \rrbracket = \mathsf{D}\left[ \llbracket g\circ f \rrbracket \right] = \mathsf{D}\left[\mu_C \circ \mathsf{S}\left( \llbracket g \rrbracket \right) \circ \llbracket f \rrbracket \right] = \mu_C \circ \mathsf{D}\left[  \mathsf{S}\left( \llbracket g \rrbracket \right) \circ \llbracket f \rrbracket \right]  \\
    = \mu_C \circ \mathsf{D}\left[\mathsf{S}\left( \llbracket g \rrbracket \right) \right] \circ \left \langle \llbracket f \rrbracket \circ \pi_1,  \mathsf{D}\left[ \llbracket f \rrbracket \right] \right \rangle = \mu_C \circ \mathsf{S}\left( \mathsf{D}\left[\llbracket g \rrbracket \right] \right) \circ \omega^{-1}_{\mathsf{S}(B), \mathsf{S}(B)} \circ \left \langle \llbracket f \circ \mathsf{L}_\mathbb{S}(\pi_1) \rrbracket ,  \mathsf{D}\left[ \llbracket f \rrbracket \right] \right \rangle \\
    = \mu_C \circ \mathsf{S}\left(  \llbracket \mathsf{D}_\mathbb{S}[g] \rrbracket \right) \circ \omega^{-1}_{\mathsf{S}(B), \mathsf{S}(B)} \circ \left \langle \llbracket f \circ \pi_1 \rrbracket ,  \llbracket \mathsf{D}_\mathbb{S}[f] \rrbracket \right \rangle =  \mu_C \circ \mathsf{S}\left(  \llbracket \mathsf{D}_\mathbb{S}[g] \rrbracket \right) \circ  \left \llbracket \left \langle f \circ \pi_1 ,  \mathsf{D}_\mathbb{S}[f]  \right \rangle \right \rrbracket \\
    = \left \llbracket \mathsf{D}_\mathbb{S}[g] \circ \left \langle f \circ \pi_1 ,  \mathsf{D}_\mathbb{S}[f]  \right \rangle \right \rrbracket 
\end{gather*}
Lastly, recall that we mentioned that \textbf{[CD.4]} follows from \textbf{[CD.3]} and \textbf{[CD.5]} \cite[Lem 2.8]{lemay2018tangent}. So we conclude that the Kleisli category $\mathsf{KL}(\mathbb{S})$ is indeed a Cartesian $k$-differential category as desired. Now observe that $\llbracket f \circ \pi_2 \rrbracket = \llbracket f \rrbracket \circ \pi_2$. As such, it is clear that $\llbracket f \rrbracket$ is $\mathsf{D}_\mathbb{S}$-linear if and only if $\llbracket f \rrbracket$ is $\mathsf{D}$-linear. Furthermore, since $\eta$ is $\mathsf{D}$-linear, we compute that: 
\begin{gather*}
    \llbracket \mathsf{L}_\mathbb{S}\left( \mathsf{D}[f] \right) \rrbracket= \eta_B \circ \mathsf{D}[f] = \mathsf{D}[\eta_B \circ f] = \mathsf{D}\left[ \llbracket \mathsf{L}_\mathbb{S}(f) \rrbracket \right] = \llbracket  \mathsf{D}_\mathbb{S}\left[\mathsf{L}_\mathbb{S}(f) \right] \rrbracket
\end{gather*}
So $\mathsf{L}_\mathbb{S}$ is a strict Cartesian $k$-differential functor. Thus by Lem \ref{lem:dlinfunc}, if $f$ is $\mathsf{D}$-linear, then $\mathsf{L}_\mathbb{S}(f)$ is $\mathsf{D}_\mathbb{S}$-linear. Lastly, using that $\mu$ is $\mathsf{D}$-linear and \cite[Lem 2.2.2.(vi)]{blute2009cartesian} again, it follows from (\ref{SD}) that: 
\begin{gather*}
    \mathsf{R}_\mathbb{S}\left( \mathsf{D}_\mathbb{S}[f] \right) \circ \omega^{-1}_{A,A}  = \mu_B \circ \mathsf{S}\left( \llbracket \mathsf{D}_\mathbb{S}[f] \rrbracket \right) \circ \omega^{-1}_{A,A} = \mu_B \circ \mathsf{S}\left(  \mathsf{D} \left[ \llbracket f  \rrbracket \right] \right) \circ \omega^{-1}_{A,A}\\
    = \mu_B \circ \mathsf{D}\left[ \mathsf{S}\left( \llbracket f  \rrbracket \right) \right] =  \mathsf{D}\left[\mu_B \circ \mathsf{S}\left( \llbracket f  \rrbracket \right) \right] =  \mathsf{D}\left[ \mathsf{R}_\mathbb{S}(f) \right] 
\end{gather*}
So $\mathsf{R}_\mathbb{S}$ is a strong Cartesian $k$-differential functor as desired. 
\end{proof}

\begin{example} Trivially, for the identity monad, its Kleisli category is precisely the starting Cartesian differential category, while for the constant monad, its Kleisli category is equivalent to the terminal category (i.e. the category with one object and only the identity map). 
\end{example}

\begin{example} \label{ex:tanbunKleisli} As explained in \cite[Sec 6.2]{alvarez2020cartesian}, for the tangent bundle monad, maps in its Kleisli category are interpreted as generalized vector fields. Indeed, if $\mathbb{X}$ is a Cartesian $k$-differential category, then a map in $\mathsf{KL}(\mathbb{T})$ is actually a pair of maps $\llbracket f \rrbracket = \langle f_1, f_2 \rangle: A \to B \times B$. In $\mathsf{KL}(\mathbb{T})$, the identity map is $\llbracket 1_A \rrbracket = \langle 1_A, 0 \rangle$, while composition is given by:
\[\llbracket g \circ f \rrbracket = \left \langle g_1 \circ f_1, g_2 \circ f_1 + \mathsf{D}[g_1] \circ \langle f_1, f_2 \rangle + \mathsf{D}[g_2] \circ \langle f_1, f_2 \rangle \right \rangle\] 
and the differential combinator is given by:
\[\llbracket \mathsf{D}_\mathbb{T}[f] \rrbracket := \left \langle \mathsf{D}[f_1], \mathsf{D}[f_2] \right\rangle\] 
The fact that, even with this much more complicated composition, the differential combinator still satisfies the chain rule is quite remarkable. Among these maps are the vector fields, in the tangent category sense \cite[Def 3.1]{cockett2014differential}, which are maps of the form $\llbracket v \rrbracket = \langle 1_A, v_2 \rangle: A \to A \times A$. Note that the derivative of a vector field is no longer a vector field. For more details on the Kleisli category of the tangent bundle monad, see \cite[Sec 6.3]{lemay2021cartesian}. 
\end{example}

\begin{example}For the tangent bundle monad on $\mathsf{SMOOTH}$, maps in the Kleisi category are smooth functions $\llbracket F \rrbracket: \mathbb{R}^n \to \mathbb{R}^n \times \mathbb{R}^n$ where $\llbracket F \rrbracket(\vec x) = (f_1(\vec x) , f_2(\vec x))$ for smooth functions $f_1: \mathbb{R}^n \to \mathbb{R}^n$ and $f_2: \mathbb{R}^n \to \mathbb{R}^n$. A vector field is of the form $\llbracket V \rrbracket(\vec x) = (\vec x, v_2(\vec x))$, and these correspond precisely to the vector fields of $\mathbb{R}^n$ in the classical differential geometry. Let us work out the composition of two vector fields $\llbracket V \rrbracket: \mathbb{R} \to \mathbb{R}$ and $\llbracket W \rrbracket: \mathbb{R} \to \mathbb{R}$, which is:
\[\llbracket W \circ V \rrbracket(x) = \left( x, w_2(x) + v_2(x) + w^\prime_2(x)v_2(x) \right)\]  
\end{example}

\begin{example} \label{ex:readerKleisli} For reader monads, their Kleisli categories capture partial differentiation. First observe that if $\mathbb{X}$ is a Cartesian closed $k$-differential category, then maps in $\mathsf{KL}(\mathbb{R}^C)$, which are of the form ${A \to [C,A]}$, correspond precisely to maps of the form ${C \times A \to B}$ via Currying and unCurrying. As such, the Kleisli category of the reader monad of $C$ is isomorphic to the coKleisli category of the comonad $C \times -$, which is better known as the simple slice category over $C$, denoted $\mathbb{X}[C]$ \cite[Sec 2.1]{blute2015cartesian}. Concretely, $\mathbb{X}[C]$ is the category whose objects are the same as $\mathbb{X}$ but where $\mathbb{X}[C](A,B) = \mathbb{X}(C \times A, B)$. As explained in \cite[Cor 4.5.2]{blute2009cartesian}, the simple slice category $\mathbb{X}[C]$ is again a Cartesian $k$-differential category but whose differential combinator $\mathsf{D}^C$ is given by partial differentiation, that is, for a map $f: C \times A \to B$, $\mathsf{D}^C[f]: C \times A \times A \to B$ is its partial derivative in only the variable $A$. In the term calculus:

\[\mathsf{D}^C[f](c,a,b) = \frac{\mathsf{d}f(c,x)}{\mathsf{d}x}(a) \cdot b := \frac{\mathsf{d}f(z,x)}{\mathsf{d}(z,x)}(c,a) \cdot (0,b) \] 
However, the partial derivative of a map $f: C \times A \to B$ is the same as taking the total derivative of its Curry $\lambda(f): A \to [C,B]$ and then unCurrying, that is:
\[\mathsf{D}^C[f] = \lambda^{-1}\left( \mathsf{D}[\lambda(f)] \right)\] 
As such, the lifting of the differential combinator $\mathsf{D}$ to $\mathsf{KL}(\mathbb{R}^C)$ does indeed recapture the fact that we can define partial differentiation in context $C$. For more details on partial differentiation in Cartesian differential categories, see \cite{blute2009cartesian,blute2015cartesian}. 
\end{example}

\section{Cartesian Differential Abstract Kleisli Categories}

The goal of this section is to give a precise characterization of the Cartesian differential categories which are the Kleisli categories of Cartesian differential monads. To achieve this, we use abstract Kleisli categories (sometimes also called thunk-force categories), which were introduced by F\"{u}hrmann in \cite{fuhrmann1999direct}. Abstract Kleisli categories provide a direct description of Kleisli categories of monads. For any monad, its Kleisli category is an abstract Kleisli category, and conversely, every abstract Kleisli category is isomorphic to the Kleisli category of a monad. In this section, we introduce the notion of Cartesian differential abstract Kleisli categories, and prove how these correspond to Kleisli categories of Cartesian differential monads. 


\begin{definition}\cite[Def 2.1]{fuhrmann1999direct} An \textbf{abstract Kleisli structure} on a category $\mathbb{X}$ is a triple $(\mathsf{S}, \epsilon, \vartheta)$ consisting of an endofunctor ${\mathsf{S}: \mathbb{X} \to \mathbb{X}}$, a natural transformation $\epsilon_A: \mathsf{S}(A) \to A$, and a family of maps $\vartheta_A: A \to \mathsf{S}(A)$ (which are not necessarily natural), such that $\vartheta_{\mathsf{S}(A)}: \mathsf{S}(A) \to \mathsf{S}\mathsf{S}(A)$ is a natural transformation and also that the following equalities hold:
\begin{align}
    \epsilon_A \circ \vartheta_A = 1_A && \epsilon_{\mathsf{S}(A)} \circ \mathsf{S}(\vartheta_A) = 1_{\mathsf{S}(A)} && \vartheta_{\mathsf{S}(A)} \circ \vartheta_A = \mathsf{S}(\vartheta_A) \circ \vartheta_A
\end{align}
An \textbf{abstract Kleisli category} is a category $\mathbb{X}$ equipped with an abstract Kleisli structure $(\mathsf{S}, \epsilon, \vartheta)$. A map ${f: A \to B}$ is said to be \textbf{$\vartheta$-natural} \cite[Def 2.3]{fuhrmann1999direct} (also sometimes called a thunkable map) if the following equality holds: 
\begin{align}
    \vartheta_B \circ f = \mathsf{S}(f) \circ \vartheta_A
\end{align}
We denote the subcategory of $\vartheta$-natural maps of $\mathbb{X}$ by $\vartheta\text{-}\mathsf{nat}[\mathbb{X}]$.
\end{definition}

The subcategory of $\vartheta$-natural maps has a canonical monad whose Kleisli category is isomorphic to the starting abstract Kleisli category. 

\begin{lemma} \label{lem:ep-com} \cite[Sec 2]{fuhrmann1999direct} Let $\mathbb{X}$ be an abstract Kleisli category with abstract Kleisli structure $(\mathsf{S}, \epsilon, \vartheta)$. Define the natural transformation ${\delta_A: \mathsf{S}(A) \to \mathsf{S}\mathsf{S}(A)}$ as $\delta_A= \mathsf{S}(\epsilon_A)$. Then $\mathbb{S} = (\mathsf{S}, \delta, \vartheta)$ is a monad on $\vartheta\text{-}\mathsf{nat}[\mathbb{X}]$ such that the functor $\mathsf{G}_\vartheta: \mathbb{X} \to \mathsf{KL}(\mathbb{S})$, defined on objects as $\mathsf{G}_\vartheta(A)=A$ and on a map $f: A \to B$ as:
\begin{align}
  \llbracket \mathsf{G}_\mathsf{S}(f) \rrbracket = \mathsf{S}(f) \circ \vartheta_A
\end{align}
is an isomorphism with inverse $\mathsf{G}^{-1}_\vartheta: \mathsf{KL}(\mathbb{S}) \to \mathbb{X}$, defined on objects as $\mathsf{G}_\vartheta(A)=A$ and on a Kleisli map ${\llbracket f \rrbracket: A \to \mathsf{S}(A)}$ as:
\begin{align}
    \mathsf{G}^{-1}_\vartheta\left( \llbracket f \rrbracket \right) = \epsilon_B \circ \llbracket f \rrbracket
\end{align}
\end{lemma}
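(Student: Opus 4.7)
The plan is to assemble the claim in three steps: first restrict $\mathsf{S}$ to an endofunctor on $\vartheta\text{-}\mathsf{nat}[\mathbb{X}]$, second verify that $(\mathsf{S},\delta,\vartheta)$ is a monad there, and finally exhibit $\mathsf{G}_\vartheta$ as an isomorphism with the claimed inverse.

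For step one, I would note that the third abstract Kleisli axiom is precisely the statement that each component $\vartheta_A$ is $\vartheta$-natural, so $\vartheta$ descends to a natural transformation on $\vartheta\text{-}\mathsf{nat}[\mathbb{X}]$. Closure of $\vartheta$-naturality under $\mathsf{S}$ is then an immediate instance of the naturality of $\vartheta_{\mathsf{S}(-)}$ applied at a $\vartheta$-natural morphism. For step two, I set $\delta_A = \mathsf{S}(\epsilon_A)$; its $\vartheta$-naturality falls out of naturality of $\vartheta_{\mathsf{S}(-)}$ at $\epsilon_A$, and naturality of $\delta$ as a family is immediate from naturality of $\epsilon$. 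The right unit law $\mathsf{S}(\epsilon_A)\circ\mathsf{S}(\vartheta_A)=1_{\mathsf{S}(A)}$ is just $\mathsf{S}$ applied to the first axiom, and associativity $\mathsf{S}(\epsilon_A)\circ\mathsf{S}\mathsf{S}(\epsilon_A) = \mathsf{S}(\epsilon_A)\circ\mathsf{S}(\epsilon_{\mathsf{S}(A)})$ reduces, after stripping off $\mathsf{S}$, to the naturality of $\epsilon$ at $\epsilon_A$.

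For step three, I would check that $\mathsf{G}_\vartheta(f) = \mathsf{S}(f)\circ\vartheta_A$ is $\vartheta$-natural via the computation $\vartheta_{\mathsf{S}(B)}\circ\mathsf{S}(f)\circ\vartheta_A = \mathsf{S}\mathsf{S}(f)\circ\vartheta_{\mathsf{S}(A)}\circ\vartheta_A = \mathsf{S}\mathsf{S}(f)\circ\mathsf{S}(\vartheta_A)\circ\vartheta_A = \mathsf{S}(\mathsf{S}(f)\circ\vartheta_A)\circ\vartheta_A$, using naturality of $\vartheta_{\mathsf{S}(-)}$ and then the third axiom. Functoriality of $\mathsf{G}_\vartheta$ then collapses Kleisli composition through $\mathsf{S}(\epsilon_B\circ\vartheta_B) = 1$. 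For the inverse, $\mathsf{G}_\vartheta^{-1}(\mathsf{G}_\vartheta(f)) = \epsilon_B\circ\mathsf{S}(f)\circ\vartheta_A = f\circ\epsilon_A\circ\vartheta_A = f$ uses naturality of $\epsilon$ followed by the first axiom; conversely, for a $\vartheta$-natural $\llbracket f\rrbracket$, one rewrites $\mathsf{G}_\vartheta(\mathsf{G}_\vartheta^{-1}(\llbracket f\rrbracket)) = \mathsf{S}(\epsilon_B)\circ\mathsf{S}(\llbracket f\rrbracket)\circ\vartheta_A = \mathsf{S}(\epsilon_B)\circ\vartheta_{\mathsf{S}(B)}\circ\llbracket f\rrbracket$ by $\vartheta$-naturality of $\llbracket f\rrbracket$ and then collapses via the left unit law.

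The main obstacle will be the left unit law $\mathsf{S}(\epsilon_A)\circ\vartheta_{\mathsf{S}(A)} = 1_{\mathsf{S}(A)}$: it is what is needed both to complete the monad structure of $(\mathsf{S},\delta,\vartheta)$ and to conclude $\mathsf{G}_\vartheta\circ\mathsf{G}^{-1}_\vartheta = \mathrm{id}$, and it is the one identity that does not come from a single abstract Kleisli axiom but must be extracted by combining them with the naturality of $\vartheta_{\mathsf{S}(-)}$.
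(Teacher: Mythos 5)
The paper does not actually prove this lemma --- it is imported wholesale from F\"{u}hrmann's paper via the citation --- so your proposal is a reconstruction rather than something to compare line by line. Its architecture is right, and most of the individual computations check out: the third abstract Kleisli axiom is indeed exactly the statement that $\vartheta_A$ is $\vartheta$-natural, closure of $\vartheta$-natural maps under $\mathsf{S}$ is naturality of $\vartheta_{\mathsf{S}(-)}$, the right unit law and associativity follow as you say, and the $\vartheta$-naturality of $\mathsf{S}(f)\circ\vartheta_A$, the functoriality of $\mathsf{G}_\vartheta$, and the two inverse computations are all correct modulo the point below.

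The genuine gap is the one you flag yourself and then leave unresolved: the left unit law $\mathsf{S}(\epsilon_A)\circ\vartheta_{\mathsf{S}(A)}=1_{\mathsf{S}(A)}$. You assert that it ``must be extracted by combining'' the axioms with naturality of $\vartheta_{\mathsf{S}(-)}$, but no non-degenerate extraction exists from the axioms as printed in this paper. Indeed, the printed second axiom $\epsilon_{\mathsf{S}(A)}\circ\mathsf{S}(\vartheta_A)=1_{\mathsf{S}(A)}$, combined with naturality of $\epsilon$ at the map $\vartheta_A$, gives $\vartheta_A\circ\epsilon_A=\epsilon_{\mathsf{S}(A)}\circ\mathsf{S}(\vartheta_A)=1_{\mathsf{S}(A)}$, which together with the first axiom forces every $\epsilon_A$ to be an isomorphism and trivialises the whole notion; it also fails in the motivating example, since for the abstract Kleisli structure of Lemma \ref{lem:kleisliAB} on the Kleisli category of a monad the printed equation unwinds to $\mathsf{S}(\eta_A)=\eta_{\mathsf{S}(A)}$, which is false for, say, the list monad. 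The resolution is that F\"{u}hrmann's second axiom is $(\mathsf{S}\epsilon)\circ(\vartheta\mathsf{S})=\mathrm{id}$, i.e.\ precisely the left unit law $\mathsf{S}(\epsilon_A)\circ\vartheta_{\mathsf{S}(A)}=1_{\mathsf{S}(A)}$; the transcription in the paper has swapped which of $\epsilon$ and $\vartheta$ the functor is applied to. Once the axiom is taken in that form there is nothing to extract --- the left unit law is an axiom, the right unit law is $\mathsf{S}$ applied to the first axiom --- and the remainder of your argument goes through unchanged. So the missing step is not a computation you overlooked but an axiom you (understandably, given the statement in front of you) did not have.
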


We now wish to equip abstract Kleisli categories with Cartesian differential structure. 

\begin{definition} A \textbf{Cartesian $k$-differential abstract Kleisli category} is an abstract Kleisli category $\mathbb{X}$ with abstract Kleisli structure $(\mathsf{S}, \epsilon, \vartheta)$ such that $\mathbb{X}$ is a Cartesian $k$-differential category where the projection maps $\pi_j$ are $\vartheta$-natural, $\mathsf{S}$ is a strong Cartesian $k$-differential functor, and for every object $A$, $\epsilon_A$ and $\vartheta_A$ are $\mathsf{D}$-linear. 
\end{definition}

We now explain why the subcategory of $\vartheta$-natural maps is a Cartesian $k$-differential category. 

\begin{lemma} Let $\mathbb{X}$ be a Cartesian $k$-differential abstract coKleisli category with abstract coKleisli structure $(\oc, \varphi, \vartheta)$. Then $\vartheta\text{-}\mathsf{nat}[\mathbb{X}]$ is a sub-Cartesian $k$-differential category of $\mathbb{X}$.
\end{lemma}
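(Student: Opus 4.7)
The plan is to verify that $\vartheta\text{-}\mathsf{nat}[\mathbb{X}]$ inherits all the Cartesian $k$-differential structure from $\mathbb{X}$. Since $\vartheta$-natural maps form a subcategory (closed under identities and composition, as is easy to check from the naturality square), it suffices to show that the finite products, the $k$-linear structure, and the differential combinator all restrict.

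For the finite products, the projections are $\vartheta$-natural by assumption and the terminal map is trivially so. For a pairing $\langle f_1, \hdots, f_n \rangle$ of $\vartheta$-natural maps, I would verify $\vartheta$-naturality by comparing both sides of the desired equation after post-composition with $\omega_{B_1, \hdots, B_n}$: using the definition of $\omega$, $\vartheta$-naturality of each $f_i$, and $\vartheta$-naturality of each $\pi_j$, both sides reduce to $\langle \vartheta_{B_1} \circ f_1, \hdots, \vartheta_{B_n} \circ f_n \rangle$, and since $\omega$ is an iso the pairings agree. For the $k$-linear structure, I would use that $\vartheta_A$ is $\mathsf{D}$-linear hence $k$-linear, and that $\mathsf{S}$ is $k$-linear on hom-sets: then $\vartheta_B \circ (r\cdot f + s \cdot g) = r\cdot(\vartheta_B \circ f) + s \cdot (\vartheta_B \circ g) = r \cdot (\mathsf{S}(f) \circ \vartheta_A) + s \cdot (\mathsf{S}(g) \circ \vartheta_A) = \mathsf{S}(r \cdot f + s \cdot g) \circ \vartheta_A$, showing the $k$-module structure on hom-sets restricts.

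The main obstacle is showing that if $f: A \to B$ is $\vartheta$-natural, then so is $\mathsf{D}[f] : A \times A \to B$; this is where the hypotheses on $\mathsf{S}$, $\vartheta$, and the projections all come into play. I would apply the differential combinator to both sides of the equation $\vartheta_B \circ f = \mathsf{S}(f) \circ \vartheta_A$. On the left, because $\vartheta_B$ is $\mathsf{D}$-linear, the chain rule \textbf{[CD.5]} gives $\mathsf{D}[\vartheta_B \circ f] = \vartheta_B \circ \mathsf{D}[f]$. On the right, the chain rule together with $\mathsf{D}$-linearity of $\vartheta_A$ gives $\mathsf{D}[\mathsf{S}(f) \circ \vartheta_A] = \mathsf{D}[\mathsf{S}(f)] \circ \langle \vartheta_A \circ \pi_1, \vartheta_A \circ \pi_2\rangle$. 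Since $\mathsf{S}$ is a strong Cartesian $k$-differential functor, $\mathsf{D}[\mathsf{S}(f)] = \mathsf{S}(\mathsf{D}[f]) \circ \omega^{-1}_{A,A}$. Finally, $\vartheta$-naturality of $\pi_1$ and $\pi_2$ implies
\begin{align*}
\omega_{A,A} \circ \vartheta_{A \times A} = \langle \mathsf{S}(\pi_1) \circ \vartheta_{A \times A}, \mathsf{S}(\pi_2) \circ \vartheta_{A \times A}\rangle = \langle \vartheta_A \circ \pi_1, \vartheta_A \circ \pi_2\rangle,
\end{align*}
so $\omega^{-1}_{A,A} \circ \langle \vartheta_A \circ \pi_1, \vartheta_A \circ \pi_2 \rangle = \vartheta_{A \times A}$. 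Assembling these identities yields $\vartheta_B \circ \mathsf{D}[f] = \mathsf{S}(\mathsf{D}[f]) \circ \vartheta_{A \times A}$, proving $\mathsf{D}[f]$ is $\vartheta$-natural.

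Once closure under the differential combinator is established, the seven axioms \textbf{[CD.1]}--\textbf{[CD.7]} hold automatically in $\vartheta\text{-}\mathsf{nat}[\mathbb{X}]$, because they hold in $\mathbb{X}$ and both the Cartesian left $k$-linear structure and the differential combinator on $\vartheta\text{-}\mathsf{nat}[\mathbb{X}]$ are by construction the restrictions of those on $\mathbb{X}$. Thus $\vartheta\text{-}\mathsf{nat}[\mathbb{X}]$ is a Cartesian $k$-differential subcategory of $\mathbb{X}$, as claimed.
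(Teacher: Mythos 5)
Your proposal is correct and follows essentially the same route as the paper's proof: closure of $\vartheta$-natural maps under pairing and the $k$-linear operations, followed by the key computation that applies $\mathsf{D}$ to the naturality square for $f$, using $\mathsf{D}$-linearity of $\vartheta$, the compatibility $\mathsf{D}[\mathsf{S}(f)] = \mathsf{S}(\mathsf{D}[f]) \circ \omega^{-1}_{A,A}$, and the identity $\omega_{A,A} \circ \vartheta_{A\times A} = \vartheta_A \times \vartheta_A$ coming from $\vartheta$-naturality of the projections. The only cosmetic difference is that you derive $\mathsf{D}[g\circ f] = g\circ\mathsf{D}[f]$ and $\mathsf{D}[f\circ k] = \mathsf{D}[f]\circ(k\times k)$ directly from \textbf{[CD.5]} where the paper cites them as a known lemma.
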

\begin{proof} It is immediate that the unique map to the terminal object $0: A \to \ast$ is always $\vartheta$-natural, and since the projections are $\vartheta$-natural, then the pairing of $\vartheta$-natural maps is again $\vartheta$-natural. Therefore, $\vartheta\text{-}\mathsf{nat}[\mathbb{X}]$ does indeed have finite products. Now using that $\mathsf{S}$ preserves $k$-linear structure and that since $\vartheta_A$ is $\mathsf{D}$-linear it is also $k$-linear, for any parallel $\vartheta$-natural maps $f$ and $g$ we compute: 
\begin{gather*}
    \mathsf{S}(r \cdot f + s\cdot g) \circ \vartheta_A = \left( r \cdot \mathsf{S}(f) + s \cdot \mathsf{S}(g) \right) \circ \vartheta_A = r \cdot \left(  \mathsf{S}(f) \circ \vartheta_A\right) +  s \cdot \left(  \mathsf{S}(g) \circ \vartheta_A\right) \\=  r \cdot \left(  \vartheta_B \circ f \right) +  s \cdot \left(  \vartheta_B \circ g \right) = \vartheta_A \circ (r \cdot f + s\cdot g)
\end{gather*}
 So $\vartheta$-natural maps are closed under $k$-linear structure, and we have that $\vartheta\text{-}\mathsf{nat}[\mathbb{X}]$ is a sub-Cartesian left $k$-linear category of $\mathbb{X}$. Lastly, we need to check that the derivative of a $\vartheta$-natural map is again $\vartheta$-natural map. To do so, first observe that by $\vartheta$-naturality of the projections, it follows that:
 \begin{align}
     \vartheta_{A_1} \times \hdots \times \vartheta_{A_j} = \omega_{A_1,\hdots, A_n} \circ \vartheta_{A_1 \times \hdots \times A_n}
 \end{align}
 Also recall that by \cite[Lem 2.2.2.(vi)]{blute2009cartesian}, if $g$ and $k$ are $\mathsf{D}$-linear then $\mathsf{D}[g \circ f] = g \circ \mathsf{D}[f]$ and $\mathsf{D}[f \circ k] = \mathsf{D}[f] \circ (k \times k)$. Thus since $\vartheta_A$ is $\mathsf{D}$-linear, we compute: 
 \begin{gather*}
    \vartheta_B \circ \mathsf{D}[f] = \mathsf{D}\left[ \vartheta_B \circ f  \right] = \mathsf{D}\left[  \mathsf{S}(f) \circ \vartheta_A \right] = \mathsf{D}\left[  \mathsf{S}(f)  \right]\circ (\vartheta_A \times \vartheta_A) \\
    = \mathsf{S}(\mathsf{D}[f]) \circ \omega^{-1}_{A,A} \circ (\vartheta_A \times \vartheta_A) =  \mathsf{S}(\mathsf{D}[f]) \circ \vartheta_{A \times A} 
 \end{gather*}
 So we conclude that $\vartheta\text{-}\mathsf{nat}[\mathbb{X}]$ is a Cartesian $k$-differential category.
\end{proof}

We can now prove that canonical monad on the subcategory of $\vartheta$-natural is a Cartesian differential monad, and in particular that the Kleisli category is isomorphic as a Cartesian differential category to the starting Cartesian differential abstract Kleisli category.  

\begin{proposition}\label{propab1} Let $\mathbb{X}$ be a Cartesian $k$-differential abstract Kleisli category with differential combinator $\mathsf{D}$ and abstract coKleisli structure $(\oc, \varphi, \vartheta)$. Then the monad $\mathbb{S}$, as defined in Lemma \ref{lem:ep-com}, is a Cartesian $k$-differential monad on $\vartheta\text{-}\mathsf{nat}[\mathbb{X}]$. Furthermore, the isomorphism $\mathsf{G}_\vartheta: \mathbb{X} \to \mathsf{KL}(\mathbb{S})$ is a strict Cartesian $k$-differential functor (and so is its inverse), thus $\mathbb{X} \cong \mathsf{KL}(\mathbb{S})$ as Cartesian $k$-differential categories. 
\end{proposition}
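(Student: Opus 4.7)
The plan is to verify that the monad $\mathbb{S} = (\mathsf{S}, \delta, \vartheta)$ on $\vartheta\text{-}\mathsf{nat}[\mathbb{X}]$ satisfies the definition of Cartesian $k$-differential monad, and then check directly from the formula $\llbracket \mathsf{G}_\vartheta(f) \rrbracket = \mathsf{S}(f) \circ \vartheta_A$ that $\mathsf{G}_\vartheta$ preserves the Cartesian left $k$-linear structure strictly and intertwines the differential combinators.

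First I would observe that $\mathsf{S}$ restricts to an endofunctor of $\vartheta\text{-}\mathsf{nat}[\mathbb{X}]$: if $f$ is $\vartheta$-natural, then naturality of $\vartheta_{\mathsf{S}(-)}$ immediately gives $\vartheta_{\mathsf{S}(B)} \circ \mathsf{S}(f) = \mathsf{S}\mathsf{S}(f) \circ \vartheta_{\mathsf{S}(A)}$, so $\mathsf{S}(f)$ is $\vartheta$-natural. The strong Cartesian $k$-differential functor structure of $\mathsf{S}$ on $\mathbb{X}$ descends to $\vartheta\text{-}\mathsf{nat}[\mathbb{X}]$ because by the previous lemma this subcategory shares the same products and $k$-linear structure with $\mathbb{X}$. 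For the monad unit, $\vartheta_A$ is $\mathsf{D}$-linear by hypothesis, and for the multiplication $\delta_A = \mathsf{S}(\epsilon_A)$, the $\mathsf{D}$-linearity of $\epsilon_A$ combined with Lemma \ref{lem:dlinfunc} yields that $\delta_A$ is also $\mathsf{D}$-linear. Thus $\mathbb{S}$ is a Cartesian $k$-differential monad.

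Next I would verify that $\mathsf{G}_\vartheta$ is a strict Cartesian $k$-differential functor. Strict preservation of products reduces to the identity $\llbracket \mathsf{G}_\vartheta(\pi_j) \rrbracket = \mathsf{S}(\pi_j) \circ \vartheta_{A_1 \times \cdots \times A_n} = \vartheta_{A_j} \circ \pi_j = \llbracket \pi_j \rrbracket$, which follows precisely from the $\vartheta$-naturality of the projections (assumed in the definition of a Cartesian $k$-differential abstract Kleisli category). Preservation of the $k$-linear structure on hom-sets follows from $k$-linearity of $\mathsf{S}$ on hom-sets and from the definition of scalar multiplication and addition in $\mathsf{KL}(\mathbb{S})$ lifted in Lemma \ref{lem:KleisliCLAC}.

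The main obstacle, and the only nontrivial computation, is intertwining the two differential combinators. For this I would first establish the auxiliary identity $\omega^{-1}_{A,A} \circ (\vartheta_A \times \vartheta_A) = \vartheta_{A \times A}$, which is exactly the identity already exploited in the proof of the previous lemma and again comes from the $\vartheta$-naturality of the projections. Then, using $\mathsf{D}$-linearity of $\vartheta_A$ together with \cite[Lem 2.2.2.(vi)]{blute2009cartesian} and the equality (\ref{SD}) for $\mathsf{S}$, I would compute
\begin{align*}
\llbracket \mathsf{D}_\mathbb{S}[\mathsf{G}_\vartheta(f)] \rrbracket &= \mathsf{D}\bigl[\mathsf{S}(f) \circ \vartheta_A\bigr] = \mathsf{D}[\mathsf{S}(f)] \circ (\vartheta_A \times \vartheta_A) \\
&= \mathsf{S}(\mathsf{D}[f]) \circ \omega^{-1}_{A,A} \circ (\vartheta_A \times \vartheta_A) = \mathsf{S}(\mathsf{D}[f]) \circ \vartheta_{A\times A} = \llbracket \mathsf{G}_\vartheta(\mathsf{D}[f]) \rrbracket,
\end{align*}
which is exactly the strict differential compatibility condition for $\mathsf{G}_\vartheta$. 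Since $\mathsf{G}_\vartheta$ is already known to be an isomorphism by Lemma \ref{lem:ep-com}, and strict structure preservation passes through inverses of isomorphisms of categories, we conclude that $\mathsf{G}^{-1}_\vartheta$ is likewise a strict Cartesian $k$-differential functor, giving the desired isomorphism $\mathbb{X} \cong \mathsf{KL}(\mathbb{S})$ of Cartesian $k$-differential categories.
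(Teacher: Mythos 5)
Your proposal is correct and follows essentially the same route as the paper's proof: $\mathsf{D}$-linearity of $\delta_A = \mathsf{S}(\epsilon_A)$ via Lemma \ref{lem:dlinfunc}, and the identical chain of equalities $\mathsf{D}[\mathsf{S}(f) \circ \vartheta_A] = \mathsf{S}(\mathsf{D}[f]) \circ \omega^{-1}_{A,A} \circ (\vartheta_A \times \vartheta_A) = \mathsf{S}(\mathsf{D}[f]) \circ \vartheta_{A \times A}$ using $\mathsf{D}$-linearity of $\vartheta_A$ and the $\vartheta$-naturality of the projections. You supply slightly more detail than the paper on why $\mathsf{G}_\vartheta$ is a strict Cartesian $k$-linear functor, but the argument is the same.
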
 
\begin{proof} By assumption, $\mathsf{S}$ is a strong Cartesian $k$-differential functor and $\vartheta_A$ is $\mathsf{D}$-linear. So it remains to explain why $\delta_A$ is also $\mathsf{D}$-linear. However again by the assumption that $\epsilon_A$ is $\mathsf{D}$-linear and that, by Lem \ref{lem:dlinfunc}, $\mathsf{S}$ preserves $\mathsf{D}$-linear maps, it follows that $\delta_A = \mathsf{S}(\epsilon_A)$ is $\mathsf{D}$-linear. So $\mathbb{S}$ is indeed a Cartesian differential monad. So now we check that $\mathsf{G}_\vartheta$ is a strict Cartesian $k$-differential functor. Clearly, $\mathsf{G}_\vartheta$ is a strict Cartesian $k$-linear functor, so it remains to check that: 
\begin{gather*}
    \llbracket \mathsf{D}_{\mathbb{S}}\left[ \mathsf{G}_\mathsf{S}(f) \right] \rrbracket = \mathsf{D}\left[ \llbracket \mathsf{G}_\mathsf{S}(f)  \rrbracket \right] = \mathsf{D}\left[ \mathsf{S}(f) \circ \vartheta_A \right] =  \mathsf{D}\left[ \mathsf{S}(f)  \right] \circ (\vartheta_A \times \vartheta_A)\\
    = \mathsf{S}(\mathsf{D}[f]) \circ \omega^{-1}_{A,A} \circ (\vartheta_A \times \vartheta_A) = \mathsf{S}(\mathsf{D}[f]) \circ \vartheta_{A \times A} = \llbracket  \mathsf{G}_\mathsf{S}\left( \mathsf{D}\left[f \right] \right) \rrbracket 
\end{gather*}
So $\mathsf{G}_\vartheta$ is a strict Cartesian $k$-differential functor, and therefore so is $\mathsf{G}^{-1}_\vartheta$. Thus we conclude that $\mathbb{X} \cong \mathsf{KL}(\mathbb{S})$ are isomorphic as Cartesian $k$-differential categories. 
\end{proof}

We now wish to show the converse and explain how every Kleisli category of a Cartesian differential monad is a Cartesian differential abstract Kleisli category. First, let us recall how every Kleisli category for any monad is an abstract Kleisli category: 

\begin{lemma}\label{lem:kleisliAB} \cite[Sec 2]{fuhrmann1999direct} Let $\mathbb{S}$ be a monad on a category $\mathbb{X}$. Define the functor $\mathsf{S}_\mathbb{S}: \mathsf{KL}(\mathbb{S}) \to \mathsf{KL}(\mathbb{S})$ as the composite $\mathsf{S}_\mathbb{S} = \mathsf{L}_\mathbb{S} \circ \mathsf{R}_\mathbb{S}$, explicity, $\mathsf{S}_\mathbb{S}$ is defined on objects as $\mathsf{S}_\mathbb{S}(A) = \mathsf{S}(A)$ and on maps as:
\begin{align}
    \llbracket \mathsf{S}_\mathbb{S}(f) \rrbracket = \eta_{\mathsf{S}(A)} \circ \mu_A \circ \mathsf{S}\left( \llbracket f \rrbracket \right)
\end{align}
For every object $A$, define the maps $\epsilon_A \in \mathsf{KL}(\mathbb{S})(\mathsf{S}(A),A )$ and $\vartheta_A \in \mathsf{KL}(\mathbb{S})(A, \mathsf{S}(A) )$ as follows:
\begin{align}
    \llbracket \epsilon_A \rrbracket = 1_{\mathsf{S}(A)} && \llbracket \vartheta_A \rrbracket = \eta_{\mathsf{S}(A)} \circ \eta_A
\end{align}
Then $\mathsf{KL}(\mathbb{S})$ is an abstract Kleisli category with abstract Kleisli structure $(\mathsf{S}_\mathbb{S}, \epsilon, \vartheta )$.
\end{lemma}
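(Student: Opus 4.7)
The plan is to verify each clause of the definition of an abstract Kleisli structure by systematically unfolding Kleisli composition via the formula $\llbracket g \circ f \rrbracket = \mu \circ \mathsf{S}(\llbracket g \rrbracket) \circ \llbracket f \rrbracket$ and then simplifying with the monad laws. Since $\mathsf{S}_\mathbb{S}$ is defined as the composite $\mathsf{L}_\mathbb{S} \circ \mathsf{R}_\mathbb{S}$ of two functors, its functoriality is automatic, and its formula on maps is obtained by a single direct computation. So the real content of the lemma lies in (a) showing that $\epsilon$ is a natural transformation in $\mathsf{KL}(\mathbb{S})$, (b) showing that $\vartheta_{\mathsf{S}(-)}$ is a natural transformation in $\mathsf{KL}(\mathbb{S})$, and (c) verifying the three equations.

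For (a), naturality of $\epsilon$ requires that for every Kleisli map $\llbracket f \rrbracket: A \to \mathsf{S}(B)$ the equation $\llbracket \epsilon_B \circ \mathsf{S}_\mathbb{S}(f) \rrbracket = \llbracket f \circ \epsilon_A \rrbracket$ holds in $\mathbb{X}$. Unfolding the left-hand bracket using $\llbracket \epsilon_B \rrbracket = 1_{\mathsf{S}(B)}$ and the formula for $\mathsf{S}_\mathbb{S}$ yields $\mu_B \circ \mathsf{S}(1_{\mathsf{S}(B)}) \circ \eta_{\mathsf{S}(B)} \circ \mu_B \circ \mathsf{S}(\llbracket f \rrbracket)$, which by the unit law $\mu \circ \eta_\mathsf{S} = 1$ collapses to $\mu_B \circ \mathsf{S}(\llbracket f \rrbracket)$. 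Unfolding the right-hand bracket and using the same unit law yields the same composite. The computation for (b), which asks that $\vartheta_{\mathsf{S}(B)} \circ \mathsf{S}_\mathbb{S}(f) = \mathsf{S}_\mathbb{S}(\mathsf{S}_\mathbb{S}(f)) \circ \vartheta_{\mathsf{S}(A)}$ in $\mathsf{KL}(\mathbb{S})$, is analogous and uses naturality of $\eta$ alongside the same monad identities.

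For (c), the three equations reduce to direct calculations. The equation $\epsilon_A \circ \vartheta_A = 1_A$ expands to $\mu_A \circ \mathsf{S}(1_{\mathsf{S}(A)}) \circ \eta_{\mathsf{S}(A)} \circ \eta_A$, and the unit law $\mu_A \circ \eta_{\mathsf{S}(A)} = 1_{\mathsf{S}(A)}$ immediately reduces this to $\eta_A = \llbracket 1_A \rrbracket$. The equation $\epsilon_{\mathsf{S}(A)} \circ \mathsf{S}_\mathbb{S}(\vartheta_A) = 1_{\mathsf{S}(A)}$ is verified by an analogous reduction, this time using also naturality of $\eta$ and the other unit law $\mu \circ \mathsf{S}\eta = 1$. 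The third equation $\vartheta_{\mathsf{S}(A)} \circ \vartheta_A = \mathsf{S}_\mathbb{S}(\vartheta_A) \circ \vartheta_A$ expands on both sides to a composite that, after applying functoriality of $\mathsf{S}$, naturality of $\eta$, and the two unit laws, collapses to $\eta_{\mathsf{S}\mathsf{S}(A)} \circ \eta_{\mathsf{S}(A)} \circ \eta_A$.

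The main obstacle is not conceptual but notational: one must scrupulously distinguish Kleisli composition from composition in $\mathbb{X}$ at every step, keep track of where the interpretation brackets are applied, and recognise repeatedly the two monad unit laws hidden inside the unfolded composites. No step requires more than the standard monad axioms together with the formulae $\llbracket \eta_A \rrbracket$-style definitions given in the statement, so the verification is routine bookkeeping rather than genuinely difficult.
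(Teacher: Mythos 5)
Your overall strategy---unfold Kleisli composition via $\llbracket g \circ f \rrbracket = \mu \circ \mathsf{S}(\llbracket g \rrbracket) \circ \llbracket f \rrbracket$ and collapse with the monad laws---is the right one, and your treatment of the naturality of $\epsilon$, the naturality of $\vartheta_{\mathsf{S}(-)}$, the equation $\epsilon_A \circ \vartheta_A = 1_A$, and the third equation all check out. However, the step where you claim that $\epsilon_{\mathsf{S}(A)} \circ \mathsf{S}_\mathbb{S}(\vartheta_A) = 1_{\mathsf{S}(A)}$ ``is verified by an analogous reduction'' does not go through. Carrying the reduction out: $\llbracket \mathsf{S}_\mathbb{S}(\vartheta_A) \rrbracket = \eta_{\mathsf{S}\mathsf{S}(A)} \circ \mu_{\mathsf{S}(A)} \circ \mathsf{S}(\eta_{\mathsf{S}(A)}) \circ \mathsf{S}(\eta_A) = \eta_{\mathsf{S}\mathsf{S}(A)} \circ \mathsf{S}(\eta_A)$, and hence
\begin{align*}
\llbracket \epsilon_{\mathsf{S}(A)} \circ \mathsf{S}_\mathbb{S}(\vartheta_A) \rrbracket = \mu_{\mathsf{S}(A)} \circ \mathsf{S}\left(1_{\mathsf{S}\mathsf{S}(A)}\right) \circ \eta_{\mathsf{S}\mathsf{S}(A)} \circ \mathsf{S}(\eta_A) = \mathsf{S}(\eta_A)
\end{align*}
whereas $\llbracket 1_{\mathsf{S}(A)} \rrbracket = \eta_{\mathsf{S}(A)}$. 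For a general monad $\mathsf{S}(\eta_A) \neq \eta_{\mathsf{S}(A)}$ (for the list monad, applied to $[a,b]$ these give $[[a],[b]]$ and $[[a,b]]$ respectively), so the equation in the form you verify it is simply false in $\mathsf{KL}(\mathbb{S})$.

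The root of the problem is that F\"{u}hrmann's second axiom, written diagrammatically as $\vartheta_{LA};L(\varepsilon_A) = id_{LA}$, translates in classical order to $\mathsf{S}(\epsilon_A) \circ \vartheta_{\mathsf{S}(A)} = 1_{\mathsf{S}(A)}$, not to $\epsilon_{\mathsf{S}(A)} \circ \mathsf{S}(\vartheta_A) = 1_{\mathsf{S}(A)}$ as printed in the paper's Definition; a blind verification should have flagged this, since the correctly-oriented version is the one that actually holds: $\llbracket \mathsf{S}_\mathbb{S}(\epsilon_A) \rrbracket = \eta_{\mathsf{S}(A)} \circ \mu_A$, and naturality of $\eta$ together with the two unit laws collapses $\llbracket \mathsf{S}_\mathbb{S}(\epsilon_A) \circ \vartheta_{\mathsf{S}(A)} \rrbracket$ to $\eta_{\mathsf{S}(A)} = \llbracket 1_{\mathsf{S}(A)} \rrbracket$. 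So the concrete gap is that you assert a computation yielding $\mathsf{S}(\eta_A)$ produces $\eta_{\mathsf{S}(A)}$; the fix is to verify the transposed axiom instead. (A minor unrelated point: in the displayed formula for $\llbracket \mathsf{S}_\mathbb{S}(f) \rrbracket$ the subscripts on $\eta$ and $\mu$ should be the codomain $B$ of $f$, not $A$.)
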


For an arbitrary monad $\mathbb{S}$, $\vartheta\text{-}\mathsf{nat}[\mathsf{KL}(\mathbb{S})]$ is not necessarily isomorphic to $\mathbb{X}$. That said, $\mathbb{X}$ always embeds in $\vartheta\text{-}\mathsf{nat}[\mathsf{KL}(\mathbb{S})]$ since for any map $f$ in $\mathbb{X}$, $\mathsf{L}_\mathbb{S}(f)$ is $\vartheta$-natural in $\mathsf{KL}(\mathbb{S})$. If the monad $\mathbb{S}$ is exact, meaning that $\eta_A$ is an equalizer of $\eta_{\mathsf{S}(A)}$ and $\mathsf{S}(\eta_A)$, then we do have the isomorphism $\vartheta\text{-}\mathsf{nat}[\mathsf{KL}(\mathbb{S})] \cong \vartheta\text{-}\mathsf{nat}[\mathsf{KL}(\mathbb{S})]$ \cite[Sec 5]{fuhrmann1999direct}. For an abstract Kleisli category, the canonical comonad on its subcategory of $\vartheta$-natural maps is exact. 

\begin{proposition}\label{propab2} Let $\mathbb{S}$ be a Cartesian $k$-differential monad on a Cartesian $k$-differential category $\mathbb{X}$. Then $\mathsf{KL}(\mathbb{S})$ is a Cartesian $k$-differential abstract Kleisli category with abstract Kleisli structure $(\mathsf{S}_\mathbb{S}, \epsilon, \vartheta )$ defined as in Lemma \ref{lem:kleisliAB}. 
\end{proposition}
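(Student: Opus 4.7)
The plan is to verify in turn each of the extra conditions in the definition of a Cartesian $k$-differential abstract Kleisli category, making full use of results already established. By Theorem \ref{thm:CDMKleisli}, we already have that $\mathsf{KL}(\mathbb{S})$ is a Cartesian $k$-differential category with differential combinator $\mathsf{D}_\mathbb{S}$, and by Lemma \ref{lem:kleisliAB}, it is an abstract Kleisli category with structure $(\mathsf{S}_\mathbb{S}, \epsilon, \vartheta)$. Thus it remains to check three things: that the projections $\pi_j$ are $\vartheta$-natural, that $\mathsf{S}_\mathbb{S}$ is a strong Cartesian $k$-differential functor, and that each $\epsilon_A$ and $\vartheta_A$ is $\mathsf{D}_\mathbb{S}$-linear.

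For $\vartheta$-naturality of the projections, observe that the Kleisli projections are given by $\llbracket \pi_j \rrbracket = \eta_{A_j} \circ \pi_j = \llbracket \mathsf{L}_\mathbb{S}(\pi_j) \rrbracket$, so they lie in the image of $\mathsf{L}_\mathbb{S}$. The remark preceding the proposition already tells us that every map of the form $\mathsf{L}_\mathbb{S}(f)$ is $\vartheta$-natural. If desired, I would verify this by unfolding Kleisli composition on both sides of the equation $\vartheta_B \circ \mathsf{L}_\mathbb{S}(f) = \mathsf{S}_\mathbb{S}(\mathsf{L}_\mathbb{S}(f)) \circ \vartheta_A$ and simplifying using naturality of $\eta$ together with the monad unit law $\mu \circ \mathsf{S}(\eta) = 1$. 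For the second condition, since $\mathsf{S}_\mathbb{S} = \mathsf{L}_\mathbb{S} \circ \mathsf{R}_\mathbb{S}$ and Theorem \ref{thm:CDMKleisli}(iii) gives that $\mathsf{L}_\mathbb{S}$ is strict Cartesian $k$-differential and $\mathsf{R}_\mathbb{S}$ is strong Cartesian $k$-differential, the composite is again strong Cartesian $k$-differential by routine composition of the product-preserving isomorphisms and the identity (\ref{SD}).

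For the $\mathsf{D}_\mathbb{S}$-linearity of $\epsilon_A$ and $\vartheta_A$, I invoke Theorem \ref{thm:CDMKleisli}(ii), which reduces the check to verifying that the underlying maps $\llbracket \epsilon_A \rrbracket$ and $\llbracket \vartheta_A \rrbracket$ in $\mathbb{X}$ are $\mathsf{D}$-linear. The former is immediate since $\llbracket \epsilon_A \rrbracket = 1_{\mathsf{S}(A)}$ and identities are always $\mathsf{D}$-linear. For $\llbracket \vartheta_A \rrbracket = \eta_{\mathsf{S}(A)} \circ \eta_A$, this is a composite of $\mathsf{D}$-linear maps because $\eta$ is $\mathsf{D}$-linear at every component by the definition of a Cartesian $k$-differential monad, and the composite of $\mathsf{D}$-linear maps is $\mathsf{D}$-linear (a one-line consequence of \textbf{[CD.5]} noted in the proof of Theorem \ref{thm:CDMKleisli}).

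Overall, the proof is essentially bookkeeping: all of the substantive work has already been done in Theorem \ref{thm:CDMKleisli}. The only mild piece of verification is the $\vartheta$-naturality of the projections, which is either cited directly from the remark immediately before the proposition or dispatched by a short calculation with the monad laws; this is the ``hardest'' step but is entirely routine.
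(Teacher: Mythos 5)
Your proposal is correct and follows essentially the same route as the paper's own proof: projections are $\vartheta$-natural because they lie in the image of $\mathsf{L}_\mathbb{S}$, the functor $\mathsf{S}_\mathbb{S} = \mathsf{L}_\mathbb{S} \circ \mathsf{R}_\mathbb{S}$ is strong Cartesian $k$-differential as a composite of such functors, and $\epsilon_A$, $\vartheta_A$ are $\mathsf{D}_\mathbb{S}$-linear via Theorem \ref{thm:CDMKleisli}(ii) since their underlying maps ($1_{\mathsf{S}(A)}$ and $\eta_{\mathsf{S}(A)} \circ \eta_A$) are $\mathsf{D}$-linear in $\mathbb{X}$.
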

\begin{proof} As explained above, since the projections are of the form $\llbracket \pi_j \rrbracket = \llbracket \mathsf{L}_\mathbb{S}(\pi_j) \rrbracket$, they are $\vartheta$-natural. Furthermore, since $\mathsf{S}_\mathbb{S} = \mathsf{L}_\mathbb{S} \circ \mathsf{R}_\mathbb{S}$ is the composite of strong Cartesian $k$-differential functors, it is itself a strong Cartesian $k$-differential functor. Lastly, since $\llbracket \epsilon_A \rrbracket$ and $\llbracket \vartheta_A \rrbracket$ are both $\mathsf{D}$-linear in $\mathbb{X}$, since identity maps are $\mathsf{D}$-linear and the composition of $\mathsf{D}$-linear maps is again $\mathsf{D}$-linear \cite[Lem 2.2.2.(iii)]{blute2009cartesian}, by Thm \ref{thm:CDMKleisli}, $\epsilon_A$ and $\vartheta_A$ are $\mathsf{D}_\mathbb{S}$-linear in $\mathsf{KL}(\mathbb{S})$. So we conclude that $\mathsf{KL}(\mathbb{S})$ is a Cartesian $k$-differential abstract Kleisli category, as desired.  
\end{proof}

Together, Prop \ref{propab1} \& \ref{propab2} give us a precise characterization of which Cartesian differential categories are Kleisli categories of Cartesian differential monads. So we conclude with the following: 

\begin{corollary} Let $\mathbb{Y}$ be a Cartesian $k$-differential category. Then there exists a Cartesian $k$-differential monad $\mathbb{S}$ on a Cartesian $k$-differential category $\mathbb{X}$ such that $\mathbb{Y} \cong \mathsf{KL}(\mathbb{S})$ are isomorphic as Cartesian $k$-differential categories if and only if $\mathbb{Y}$ is a Cartesian $k$-differential abstract Kleisli category. 
\end{corollary}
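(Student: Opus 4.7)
The plan is to obtain the corollary as a direct consequence of Propositions \ref{propab1} and \ref{propab2}, which together supply the two directions of the biconditional; essentially no new calculation is needed.

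For the ``if'' direction, starting from a Cartesian $k$-differential abstract Kleisli category $\mathbb{Y}$ with abstract Kleisli structure $(\mathsf{S}, \epsilon, \vartheta)$, I would take $\mathbb{X} := \vartheta\text{-}\mathsf{nat}[\mathbb{Y}]$, which is a Cartesian $k$-differential category by the preceding lemma, and let $\mathbb{S}$ be the monad on $\mathbb{X}$ built in Lemma \ref{lem:ep-com}. Proposition \ref{propab1} then says exactly that $\mathbb{S}$ is a Cartesian $k$-differential monad and that $\mathsf{G}_\vartheta: \mathbb{Y} \to \mathsf{KL}(\mathbb{S})$ is an isomorphism of Cartesian $k$-differential categories, which is the desired conclusion.

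For the ``only if'' direction, assume $\mathsf{H}: \mathbb{Y} \to \mathsf{KL}(\mathbb{S})$ is an isomorphism of Cartesian $k$-differential categories, where $\mathbb{S}$ is a Cartesian $k$-differential monad on some $\mathbb{X}$. Proposition \ref{propab2} equips $\mathsf{KL}(\mathbb{S})$ with an abstract Kleisli structure $(\mathsf{S}_\mathbb{S}, \epsilon, \vartheta)$ making it a Cartesian $k$-differential abstract Kleisli category. Transporting this structure along $\mathsf{H}^{-1}$, that is, setting $\mathsf{S}' := \mathsf{H}^{-1} \circ \mathsf{S}_\mathbb{S} \circ \mathsf{H}$, $\epsilon'_A := \mathsf{H}^{-1}(\epsilon_{\mathsf{H}(A)})$, and $\vartheta'_A := \mathsf{H}^{-1}(\vartheta_{\mathsf{H}(A)})$, yields an abstract Kleisli structure on $\mathbb{Y}$ with the required properties.

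The only point worth verifying, and the mildest of obstacles, is that each defining condition of a Cartesian $k$-differential abstract Kleisli category ($\vartheta$-naturality of projections, strongness and differential-compatibility of $\mathsf{S}$, and $\mathsf{D}$-linearity of $\epsilon$ and $\vartheta$) is preserved under transport along an isomorphism of Cartesian $k$-differential categories. This is immediate since $\mathsf{H}$ identifies the Cartesian $k$-linear structure on both sides and preserves $\mathsf{D}$-linear maps by Lem \ref{lem:dlinfunc}. Thus the biconditional follows at once.
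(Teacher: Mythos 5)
Your proposal is correct and follows essentially the same route as the paper, which states the corollary as an immediate consequence of Propositions \ref{propab1} and \ref{propab2} without writing out a proof. The only detail you add beyond the paper is the explicit transport of the abstract Kleisli structure along the isomorphism $\mathsf{H}$ in the ``only if'' direction, which is a legitimate and correctly handled point that the paper leaves implicit.
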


\section{Kleisli Differential Combinators}

In theory, it is possible to start with a Cartesian $k$-linear monad on a Cartesian left $k$-linear category, one possibly without a differential combinator, and ask when the Kleisli category has a differential combinator. However, we will argue that in practice, this is not necessarily more efficient. Nevertheless, for completeness, let us introduce the notion of a Kleisli differential combinator, which is a direct description of a differential combinator in the Kleisli category of a Cartesian $k$-linear monad. 

\begin{definition}\label{def:KDC} Let $\mathbb{S}$ be a Cartesian $k$-linear monad on a Cartesian left $k$-linear category $\mathbb{X}$. A \textbf{Kleisli differential combinator} on $\mathbb{S}$ is a family of functions $\mathsf{B}: \mathbb{X}(A,B) \to \mathbb{X}(A \times A,\mathsf{S}(B))$, where for a map $f: A \to B$, $\mathsf{B}[f]: A \times A \to \mathsf{S}(B)$ is called the \textbf{Kleisli derivative} of $f$, and such that the following axioms hold:  
\begin{description}
    \item[{\bf [KD.$\mathsf{S}$]}]  $\mathsf{B}[\mathsf{S}(f)] = \mathsf{S}(\mathsf{B}[f]) \circ \omega^{-1}_{A,A}$;  
 \item[{\bf [KD.$\mu$]}] $\mathsf{B}[\mu_A] = \eta_{\mathsf{S}(A)} \circ \mu_A \circ \pi_1$;
\item[{\bf [KD.$\eta$]}] $\mathsf{B}[\eta_A] = \eta_{\mathsf{S}(A)} \circ \eta_A \circ \pi_1$; 
\end{description}
\begin{enumerate}[{\bf [KD.1]}]
\item \label{KDCax1} $\mathsf{B}[r\cdot f+ s\cdot g] = r\cdot \mathsf{B}[f] + s\cdot \mathsf{B}[g]$;
\item \label{KDCax2} $\mathsf{B}[f] \circ \langle \pi_1, r\cdot \pi_2 + s \cdot \pi_3 \rangle = r\cdot \mathsf{B}[f] \circ \langle \pi_1, \pi_2 \rangle + s \cdot \mathsf{B}[f] \circ \langle \pi_1, \pi_3 \rangle$;  
\item \label{KDCax3} $\mathsf{B}[1_A]=\eta_A \circ \pi_1$ and $\mathsf{B}[\pi_j] = \eta_{A_{n+j}} \circ \pi_{n+j}$;
\item \label{KDCax4} $\mathsf{B}[\langle f_1, \hdots, f_n \rangle] =  \omega^{-1}_{B_1,\hdots,B_n} \circ \langle \mathsf{B}[f_1] , \hdots, \mathsf{B}[f_n] \rangle$; 
\item \label{KDCax5} $\mathsf{B}[g \circ f] = \mu_C \circ \mathsf{S}\left( \mathsf{B}[g] \right)   \circ  \omega^{-1}_{B,B}  \circ \langle \eta_B \circ f \circ \pi_0, \mathsf{B}[f] \rangle$; 
\item \label{KDCax6} $\mathsf{B}\!\left[\mathsf{B}[f] \right] \circ \left \langle \pi_1, 0, 0, \pi_2 \right \rangle = \eta_{\mathsf{S}(B)} \circ \mathsf{B}[f] \circ \langle x, y \rangle $ 
\item \label{KDCax7} $\mathsf{B}\!\left[\mathsf{B}[f] \right] \circ \left \langle \pi_1, \pi_2, \pi_3, \pi_4 \right \rangle  = \mathsf{B}\!\left[\mathsf{B}[f] \right] \circ \left \langle \pi_1, \pi_3, \pi_2, \pi_4 \right \rangle$  
\end{enumerate}
\end{definition}

The axioms {\bf [KD.1]} to {\bf [KD.7]} of a Kleisli differential combinator are just what the axioms of a differential combinator in the Kleisli category would be but expressed in the base category. As such, a Kleisli differential combinator does indeed give us a differential combinator: 

\begin{proposition} Let $\mathbb{X}$ be a Cartesian left $k$-linear category and let $\mathbb{S}$ be a Cartesian $k$-linear monad on $\mathbb{X}$ that is equipped with a Kleisli differential combinator $\mathsf{B}$. Then the Kleisli category $\mathsf{KL}(\mathbb{S})$ is a Cartesian $k$-differential category where the Cartesian left $k$-linear structure is defined as in Lemma \ref{lem:KleisliCLAC} and the differential combinator $\mathsf{D}_\mathsf{B}$ is defined on a Kleisli map $\llbracket f \rrbracket: A \to \mathsf{S}(B)$ as follows:
\begin{align}
    \llbracket \mathsf{D}_\mathsf{B}[f] \rrbracket := \mu_B \circ \mathsf{B}\left[ \llbracket f \rrbracket \right]
\end{align}
Furthermore, $\mathsf{KL}(\mathbb{S})$ is a Cartesian $k$-differential abstract Kleisli category with abstract Kleisli structure $(\mathsf{S}_\mathbb{S}, \epsilon, \vartheta )$ defined as in Lemma \ref{lem:kleisliAB}. 
\end{proposition}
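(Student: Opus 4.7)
The plan is to verify the seven axioms \textbf{[CD.1]}--\textbf{[CD.7]} for $\mathsf{D}_\mathsf{B}$ in $\mathsf{KL}(\mathbb{S})$ by translating each one back to an equation in $\mathbb{X}$ via the definition $\llbracket \mathsf{D}_\mathsf{B}[f] \rrbracket = \mu_B \circ \mathsf{B}[\llbracket f \rrbracket]$ and then matching it to the corresponding Kleisli axiom \textbf{[KD.1]}--\textbf{[KD.7]}, using the compatibility laws \textbf{[KD.$\mathsf{S}$]}, \textbf{[KD.$\mu$]}, and \textbf{[KD.$\eta$]} to absorb the $\mu$'s and $\eta$'s that appear from Kleisli composition. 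Since Lemma \ref{lem:KleisliCLAC} already supplies the Cartesian left $k$-linear structure on $\mathsf{KL}(\mathbb{S})$, only the differential axioms need checking.

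First I would dispatch the easy axioms. \textbf{[CD.1]} reduces immediately to \textbf{[KD.1]} combined with $k$-linearity of $\mu_B$, while \textbf{[CD.2]} reduces to \textbf{[KD.2]} after expanding $\llbracket \langle \pi_1, r\cdot\pi_2 + s\cdot\pi_3\rangle \rrbracket$ via Lemma \ref{lem:KleisliCLAC} and collapsing the $\eta$'s via $\mu \circ \eta = 1$. For \textbf{[CD.3]}, I would compute $\llbracket \mathsf{D}_\mathsf{B}[1_A] \rrbracket = \mu_A \circ \mathsf{B}[\eta_A]$ and apply \textbf{[KD.$\eta$]} together with the monad unit law, and similarly for projections using $\llbracket \pi_j \rrbracket = \eta_{A_j} \circ \pi_j$. \textbf{[CD.4]} falls out of \textbf{[KD.4]}; importantly the $\omega^{-1}$ that appears in \textbf{[KD.4]} is exactly the one inserted into the definition of Kleisli pairing, so they cancel and yield the desired identity.

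The main obstacle is the chain rule \textbf{[CD.5]}. Here I would expand the left-hand side as $\mu_C \circ \mathsf{B}\bigl[\mu_C \circ \mathsf{S}(\llbracket g\rrbracket) \circ \llbracket f\rrbracket\bigr]$ and peel off the $\mu_C$ and $\mathsf{S}(\llbracket g\rrbracket)$ layers by successive applications of \textbf{[KD.5]}, \textbf{[KD.$\mathsf{S}$]}, and \textbf{[KD.$\mu$]}. The bookkeeping is delicate: after each application of \textbf{[KD.5]} one accumulates $\eta$'s and $\mu$'s that must be normalised using the monad unit and associativity laws, together with a further use of \textbf{[KD.$\mathsf{S}$]} to reshape $\mathsf{S}(\mathsf{B}[\llbracket g\rrbracket])$ into the form demanded by Kleisli composition of $\mathsf{D}_\mathsf{B}[g]$ with $\langle f \circ \pi_1, \mathsf{D}_\mathsf{B}[f]\rangle$. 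Finally, \textbf{[CD.6]} and \textbf{[CD.7]} follow from \textbf{[KD.6]} and \textbf{[KD.7]}: for \textbf{[CD.6]}, one pre-composes with $\llbracket \langle \pi_1, 0, 0, \pi_2\rangle\rrbracket$, and the stray $\eta_{\mathsf{S}(B)}$ on the right-hand side of \textbf{[KD.6]} is absorbed by $\mu_B \circ \eta_{\mathsf{S}(B)} = 1_{\mathsf{S}(B)}$, while \textbf{[CD.7]} is an almost verbatim rewrite of \textbf{[KD.7]}.

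For the ``furthermore'' clause, the argument parallels Prop \ref{propab2}. By Lemma \ref{lem:kleisliAB}, $\mathsf{KL}(\mathbb{S})$ already carries the abstract Kleisli structure $(\mathsf{S}_\mathbb{S}, \epsilon, \vartheta)$, so only the three Cartesian $k$-differential compatibility conditions need verifying. The projections are $\vartheta$-natural because $\llbracket \pi_j\rrbracket = \llbracket \mathsf{L}_\mathbb{S}(\pi_j)\rrbracket$ lies in the image of $\mathsf{L}_\mathbb{S}$, and every map in that image is $\vartheta$-natural. That $\mathsf{S}_\mathbb{S}$ is a strong Cartesian $k$-differential functor on $\mathsf{KL}(\mathbb{S})$ with respect to $\mathsf{D}_\mathsf{B}$ is a direct computation from \textbf{[KD.$\mathsf{S}$]}, $\eta$-naturality, and the formula $\llbracket \mathsf{S}_\mathbb{S}(f)\rrbracket = \eta_{\mathsf{S}(B)} \circ \mu_B \circ \mathsf{S}(\llbracket f\rrbracket)$. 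Lastly, $\mathsf{D}_\mathsf{B}$-linearity of $\epsilon_A$ (whose Kleisli representative is $1_{\mathsf{S}(A)} = \mathsf{S}(1_A)$) follows by applying \textbf{[KD.$\mathsf{S}$]} to \textbf{[KD.3]}, and $\mathsf{D}_\mathsf{B}$-linearity of $\vartheta_A$ (with $\llbracket \vartheta_A\rrbracket = \eta_{\mathsf{S}(A)} \circ \eta_A$) follows from two uses of \textbf{[KD.$\eta$]} together with the monad unit law.
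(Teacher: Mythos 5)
Your proposal is correct and follows exactly the route the paper intends: the paper itself declares the verification of \textbf{[CD.1]}--\textbf{[CD.7]} to be a brute-force translation of each axiom into the base category via $\llbracket \mathsf{D}_\mathsf{B}[f] \rrbracket = \mu_B \circ \mathsf{B}[\llbracket f \rrbracket]$ and leaves it as an exercise, and derives the abstract Kleisli clause from \textbf{[KD.$\mathsf{S}$]}, \textbf{[KD.$\mu$]}, and \textbf{[KD.$\eta$]} just as you do. Your sketch in fact supplies more detail than the paper's own proof, and the strategy (matching each \textbf{[CD.$n$]} to \textbf{[KD.$n$]} while absorbing the $\mu$'s and $\eta$'s via the monad laws) is the right one.
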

\begin{proof} The proof that $\mathsf{D}_\mathsf{B}$ is indeed a differential combinator is essentially by brute force calculations, and not necessarily more enlightening for the story of this paper. As such, we will leave it as an exercise for the motivated reader. Using {\bf [KD.$\mathsf{S}$]}, {\bf [KD.$\mu$]}, and {\bf [KD.$\eta$]} it follows that $\mathsf{KL}(\mathbb{S})$ is indeed a Cartesian $k$-differential abstract Kleisli category as well.  
\end{proof}

Every Cartesian differential monad has a canonical Kleisli differential combinator given by simply composing derivatives by the unit: 

\begin{proposition} Let $\mathbb{S}$ be a Cartesian $k$-differential monad on a Cartesian $k$-differential category $\mathbb{X}$. Then $\mathbb{S}$ has a Kleisli differential combinator $\mathsf{B}_\mathbb{S}$ defined on a map $f: A \to B$ as:
\begin{align}
    \mathsf{B}_\mathsf{D}[f] := \eta_B \circ \mathsf{D}\left[ f \right] 
\end{align}
Furthermore, the differential combinators $\mathsf{D}_\mathbb{S}$ and $\mathsf{D}_{\mathsf{B}_\mathbb{S}}$ on the Kleisli category are equal:
\begin{align}
   \llbracket \mathsf{D}_{\mathsf{B}_\mathbb{S}}[f] \rrbracket=  \llbracket \mathsf{D}_\mathbb{S}[f] \rrbracket  
\end{align}
\end{proposition}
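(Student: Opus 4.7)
The plan is to directly verify the axioms [KD.$\mathsf{S}$], [KD.$\mu$], [KD.$\eta$], and [KD.1]--[KD.7] for $\mathsf{B}_\mathbb{S}[f] := \eta_B \circ \mathsf{D}[f]$, then check the final equation via the monad unit law. The main ingredients are: naturality of $\eta$; the $\mathsf{D}$-linearity (hence $k$-linearity) of $\eta$ and $\mu$ together with the consequence $\mathsf{D}[h \circ f] = h \circ \mathsf{D}[f]$ whenever $h$ is $\mathsf{D}$-linear (cited via \cite[Lem 2.2.2.(vi)]{blute2009cartesian}); and the Cartesian differential functor equation $\mathsf{D}[\mathsf{S}(f)] = \mathsf{S}(\mathsf{D}[f]) \circ \omega^{-1}_{A,A}$.

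Most of the axioms reduce to their base-category counterparts after post-composition with $\eta$. Specifically, [KD.1]--[KD.3] follow immediately from [CD.1]--[CD.3] using $k$-linearity of $\eta$. For [KD.4], the key observation is that $\eta_{B_1 \times \cdots \times B_n} = \omega^{-1}_{B_1, \ldots, B_n} \circ (\eta_{B_1} \times \cdots \times \eta_{B_n})$, which follows from naturality of $\eta$ applied to the projections; combined with [CD.4] this yields [KD.4]. For [KD.6] and [KD.7], the $\mathsf{D}$-linearity of $\eta$ lets me extract $\eta$ from the second derivative, via $\mathsf{D}[\mathsf{B}_\mathbb{S}[f]] = \mathsf{D}[\eta_B \circ \mathsf{D}[f]] = \eta_B \circ \mathsf{D}[\mathsf{D}[f]]$, so these reduce directly to [CD.6] and [CD.7]. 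For [KD.$\mu$] and [KD.$\eta$], $\mathsf{D}$-linearity of $\mu_A$ and $\eta_A$ makes their derivatives trivially expressible as composition with a projection. Finally, [KD.$\mathsf{S}$] is the Cartesian differential functor equation for $\mathsf{S}$ post-composed with $\eta_{\mathsf{S}(B)}$, using naturality.

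The one genuinely computational step is the chain rule [KD.5]. Starting from $\mathsf{B}_\mathbb{S}[g \circ f] = \eta_C \circ \mathsf{D}[g] \circ \langle f \circ \pi_1, \mathsf{D}[f]\rangle$ via [CD.5], I rewrite the right-hand side of [KD.5] by applying the product identity from the previous paragraph to get $\omega^{-1}_{B,B} \circ \langle \eta_B \circ f \circ \pi_1, \eta_B \circ \mathsf{D}[f]\rangle = \eta_{B \times B} \circ \langle f \circ \pi_1, \mathsf{D}[f]\rangle$, then slide $\mathsf{S}(\eta_C \circ \mathsf{D}[g]) \circ \eta_{B \times B} = \mathsf{S}(\eta_C) \circ \eta_C \circ \mathsf{D}[g]$ by naturality of $\eta$, and collapse $\mu_C \circ \mathsf{S}(\eta_C) = 1$ by the monad triangle identity. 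What remains is exactly $\eta_C \circ \mathsf{D}[g] \circ \langle f \circ \pi_1, \mathsf{D}[f]\rangle$, matching the left-hand side.

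The concluding equation is immediate: $\llbracket \mathsf{D}_{\mathsf{B}_\mathbb{S}}[f] \rrbracket = \mu_B \circ \mathsf{B}_\mathbb{S}\!\left[ \llbracket f \rrbracket \right] = \mu_B \circ \eta_{\mathsf{S}(B)} \circ \mathsf{D}\!\left[ \llbracket f \rrbracket \right] = \mathsf{D}\!\left[ \llbracket f \rrbracket \right] = \llbracket \mathsf{D}_\mathbb{S}[f] \rrbracket$, using the monad unit law $\mu_B \circ \eta_{\mathsf{S}(B)} = 1$. The main obstacle is simply bookkeeping through the naturality squares in [KD.5]; no new conceptual idea is needed beyond what is already in hand from Theorem \ref{thm:CDMKleisli}.
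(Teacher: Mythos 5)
Your overall plan matches the paper's: the paper itself only carries out the final computation $\llbracket \mathsf{D}_{\mathsf{B}_\mathbb{S}}[f]\rrbracket = \mu_B \circ \eta_{\mathsf{S}(B)} \circ \mathsf{D}[\llbracket f \rrbracket] = \mathsf{D}[\llbracket f \rrbracket] = \llbracket \mathsf{D}_\mathbb{S}[f]\rrbracket$ and explicitly leaves the axiom checks ``as an exercise,'' so your write-up is strictly more detailed. Your arguments for {\bf [KD.1]}--{\bf [KD.2]}, {\bf [KD.4]}--{\bf [KD.7]}, {\bf [KD.$\mu$]} and {\bf [KD.$\eta$]} are correct, and your {\bf [KD.5]} computation (pull $\eta_B \times \eta_B$ through $\omega^{-1}_{B,B}$ to get $\eta_{B \times B}$, slide it past $\mathsf{S}(\eta_C \circ \mathsf{D}[g])$ by naturality, collapse with a triangle identity) is exactly the right argument.

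However, your one-line treatment of {\bf [KD.$\mathsf{S}$]} does not go through. Post-composing $\mathsf{D}[\mathsf{S}(f)] = \mathsf{S}(\mathsf{D}[f]) \circ \omega^{-1}_{A,A}$ with $\eta_{\mathsf{S}(B)}$ gives $\mathsf{B}_\mathbb{S}[\mathsf{S}(f)] = \eta_{\mathsf{S}(B)} \circ \mathsf{S}(\mathsf{D}[f]) \circ \omega^{-1}_{A,A}$, whereas the right-hand side of {\bf [KD.$\mathsf{S}$]} is $\mathsf{S}(\mathsf{B}_\mathbb{S}[f]) \circ \omega^{-1}_{A,A} = \mathsf{S}(\eta_B) \circ \mathsf{S}(\mathsf{D}[f]) \circ \omega^{-1}_{A,A}$. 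The two sides carry $\eta_{\mathsf{S}(B)}$ versus $\mathsf{S}(\eta_B)$, and no naturality square of $\eta$ identifies these: naturality relates $\eta_Y \circ h$ with $\mathsf{S}(h) \circ \eta_X$, not $\eta_{\mathsf{S}(B)}$ with $\mathsf{S}(\eta_B)$. Concretely, for the tangent bundle monad $\mathbb{T}$ on $\mathsf{SMOOTH}$ and $f: \mathbb{R} \to \mathbb{R}$ one finds $\mathsf{B}_\mathbb{T}[\mathsf{T}(f)](x,y,z,w) = (f^\prime(x)z,\, f^{\prime\prime}(x)yz + f^\prime(x)w,\, 0,\, 0)$ while $\mathsf{T}(\mathsf{B}_\mathbb{T}[f]) \circ \omega^{-1}_{\mathbb{R},\mathbb{R}}(x,y,z,w) = (f^\prime(x)z,\, 0,\, f^{\prime\prime}(x)yz + f^\prime(x)w,\, 0)$; these differ by the canonical flip already for $f = 1_\mathbb{R}$ and $w \neq 0$. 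So either the statement of {\bf [KD.$\mathsf{S}$]} needs repairing or a genuinely different argument is required; in either case ``by naturality'' is not a proof. (Relatedly, {\bf [KD.3]}, {\bf [KD.$\mu$]} and {\bf [KD.$\eta$]} as printed use $\pi_1$ where $\eta \circ \mathsf{D}$ produces $\pi_2$, since $\mathsf{D}[h] = h \circ \pi_2$ for $\mathsf{D}$-linear $h$; these are evidently typos, but your ``follows immediately'' only holds after correcting them, which you should say explicitly.)
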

\begin{proof} We leave checking the Kleisli differential combinator axioms as an exercise for the motivated reader, as they are straightforward to check. That the differential combinators are the same follows from the monad axiom: 
\[ \llbracket \mathsf{D}_{\mathsf{B}_\mathbb{S}}[f] \rrbracket = \mu_B \circ \mathsf{B}_\mathsf{D}\left[ \llbracket f \rrbracket \right] = \mu_B \circ \eta_B \circ \mathsf{D}\left[ \llbracket f \rrbracket \right] = \mathsf{D}\left[ \llbracket f \rrbracket \right]  = \llbracket \mathsf{D}_\mathbb{S}[f] \rrbracket   \]
So $\mathsf{D}_{\mathsf{B}_\mathbb{S}}$ is precisely $\mathsf{D}_\mathbb{S}$ as desired. 
\end{proof}

While Kleisli differential combinators allow one to start from a slightly more general setting, we will now explain why they are not necessarily more advantageous than Cartesian differential monads. Indeed, using results from the previous section we can explain why every Kleisli differential combinator induces a Cartesian differential monad whose Kleisli categories are isomorphic. 

So let $\mathbb{S}$ be a Cartesian $k$-linear monad on $\mathbb{X}$ that is equipped with a Kleisli differential combinator. Then by Prop \ref{propab1} \& \ref{propab2}, we have a Cartesian differential monad $\mathbb{S}_\mathbb{S}$ on $\vartheta\text{-}\mathsf{nat}[\mathsf{KL}(\mathbb{S})]$ such that $\mathsf{KL}(\mathbb{S}) \cong \mathsf{KL}(\mathbb{S}_\mathbb{S})$ are isomorphic as Cartesian $k$-differential categories. Thus we could have simply started from a Cartesian differential monad (albeit on a possibly more complicated category). Furthermore, if $\mathbb{S}$ is exact, then $\vartheta\text{-}\mathsf{nat}[\mathsf{KL}(\mathbb{S})] \cong \mathbb{X}$, which implies that $\mathbb{X}$ was actually a Cartesian $k$-differential category to start with and the Kleisli differential combinator is just given by post-composing the differential combinator by the unit. So in conclusion we have that: 

\begin{lemma} Let $\mathbb{X}$ be a Cartesian left $k$-linear category and let $\mathbb{S}$ be a Cartesian $k$-linear monad on $\mathbb{X}$ that is equipped with a Kleisli differential combinator $\mathsf{B}$. Then there exists a Cartesian $k$-differential monad $\mathbb{S}^\prime$ on a Cartesian $k$-differential category $\mathbb{X}^\prime$ such that the Kleisli categories $\mathsf{KL}(\mathbb{S}) \cong \mathsf{KL}(\mathbb{S}^\prime)$ are isomorphic as Cartesian $k$-differential categories. 
\end{lemma}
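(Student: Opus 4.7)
The plan is to chain together the preceding proposition, which upgrades a Kleisli differential combinator on $\mathbb{S}$ into a full Cartesian $k$-differential abstract Kleisli structure on $\mathsf{KL}(\mathbb{S})$, with Proposition~\ref{propab1}, which extracts a Cartesian $k$-differential monad from any such structure. All the real work has already been done in the earlier results of this section; the statement is essentially the composite of those two, applied in order.

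First, I would invoke the preceding proposition to conclude that $\mathsf{KL}(\mathbb{S})$ is a Cartesian $k$-differential category with differential combinator $\mathsf{D}_\mathsf{B}$, and moreover that, together with the canonical abstract Kleisli structure $(\mathsf{S}_\mathbb{S}, \epsilon, \vartheta)$ of Lemma~\ref{lem:kleisliAB}, it is in fact a Cartesian $k$-differential abstract Kleisli category.

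Next, I would feed this Cartesian $k$-differential abstract Kleisli category into Proposition~\ref{propab1}. Setting $\mathbb{X}^\prime := \vartheta\text{-}\mathsf{nat}[\mathsf{KL}(\mathbb{S})]$, which is itself a Cartesian $k$-differential category by the lemma preceding Proposition~\ref{propab1}, Proposition~\ref{propab1} yields a Cartesian $k$-differential monad $\mathbb{S}^\prime$ on $\mathbb{X}^\prime$ together with a strict Cartesian $k$-differential isomorphism $\mathsf{G}_\vartheta: \mathsf{KL}(\mathbb{S}) \cong \mathsf{KL}(\mathbb{S}^\prime)$, which is exactly the conclusion of the lemma.

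The only point requiring care is bookkeeping: one must confirm that the abstract Kleisli structure exhibited on $\mathsf{KL}(\mathbb{S})$ by the preceding proposition is precisely the one to which Proposition~\ref{propab1} is then applied, so that $\mathsf{G}_\vartheta$ is genuinely an isomorphism of Cartesian $k$-differential categories rather than merely of Cartesian $k$-linear categories. Since the preceding proposition explicitly identifies this structure as the canonical $(\mathsf{S}_\mathbb{S}, \epsilon, \vartheta)$ of Lemma~\ref{lem:kleisliAB}, there is no substantive obstacle; the proof reduces to quoting the correct results in the correct order.
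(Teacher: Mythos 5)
Your proposal is correct and matches the paper's own argument exactly: the paper likewise obtains the Cartesian $k$-differential abstract Kleisli structure on $\mathsf{KL}(\mathbb{S})$ from the proposition on Kleisli differential combinators and then applies Proposition~\ref{propab1} to produce the Cartesian $k$-differential monad $\mathbb{S}^\prime$ on $\vartheta\text{-}\mathsf{nat}[\mathsf{KL}(\mathbb{S})]$ with $\mathsf{KL}(\mathbb{S}) \cong \mathsf{KL}(\mathbb{S}^\prime)$. Your bookkeeping remark about the abstract Kleisli structure being the canonical one of Lemma~\ref{lem:kleisliAB} is the right point to check, and it holds as you say.
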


For the sake of completeness, here is a trivial example which shows that it is possible to have a Kleisli differential combinator which is not directly induced from a Cartesian differential monad: 

\begin{example} Trivially, for any Cartesian left $k$-linear category, the constant monad has a Kleisli differential combinator which sends any map to zero. The Kleisli category in this case is again simply isomorphic to the terminal category.  
\end{example}

It would be interesting to find more examples of Kleisli differential combinators on Cartesian left $k$-linear categories which are not Cartesian $k$-differential categories. Furthermore, it seems likely that Kleisli differential combinators play a role in the story of \cite{bauer2018directional} and thus could help provide a deeper understanding of what makes this model work. In turn, Kleisli differential combinators could potentially be used to help provide a general construction of differential combinators for homotopy categories of Kleisli categories. 

\section{Eilenberg-Moore Categories of Cartesian Differential Monads}\label{sec:tancats}

Whenever one has a monad, an important question to ask is what the algebras of the monad and what is its Eilenberg-Moore category. In this section, we will explain why for a Cartesian differential monad, its Eilenberg-Moore category is, in fact, a tangent category \cite{cockett2014differential}. We will only provide a brief overview of tangent categories, so we invite the reader to see \cite{cockett2014differential} for a more detailed introduction. 

Recall that for a monad $\mathbb{S}$ on a category $\mathbb{X}$, an $\mathbb{S}$-algebra is a pair $(A, \alpha)$ consisting of an object $A$ and a map $\alpha: \mathsf{S}(A) \to A$ such that:
\begin{align}
    \alpha \circ \mu_A = \alpha \circ \mathsf{S}(\alpha) && \alpha \circ \eta_A = 1_A
\end{align}
and that an $\mathbb{S}$-algebra morphism $f: (A, \alpha) \to (B,\beta)$ is a map $f: A \to B$ such that:
\begin{align}
  \beta \circ \mathsf{S}(f) = f \circ \alpha  
\end{align}
The category of $\mathbb{S}$-algebras and $\mathbb{S}$-algebra morphisms between them is called the Eilenberg-Moore category of $\mathbb{S}$, and is denoted as $\mathsf{EM}(\mathbb{S})$. There is a canonical forgetful functor $\mathsf{U}_\mathbb{S}: \mathsf{EM}(\mathbb{S}) \to \mathbb{X}$ which is defined on objects as $\mathsf{U}_\mathbb{S}(A, \alpha) = A$ and on maps as $\mathsf{U}_\mathbb{S}(f) = f$. 

It is well-known that if $\mathbb{X}$ has finite products, then $\mathsf{EM}(\mathbb{S})$ also has finite products where in particular:
\begin{align}
   (A,\alpha) \times (B, \beta) = (A \times B, (\alpha \times \beta) \circ \omega_{A,B}) 
\end{align}
However if $\mathbb{X}$ is a Cartesian left $k$-linear category, and even if $\mathbb{S}$ is a Cartesian $k$-linear monad, in general, $\mathsf{EM}(\mathbb{S})$ may not be a Cartesian left $k$-linear category. This is because $\mathbb{S}$-algebra morphisms are not necessarily closed under $k$-linear structure. Indeed, $\beta \circ \mathsf{S}(r\cdot f + s \cdot g)$ may not equal $(r\cdot f + s \cdot g) \circ \alpha$. Similarly, if $\mathbb{X}$ is a Cartesian $k$-differential category, even if $\mathbb{S}$ is a Cartesian $k$-differential monad, then the derivative of an $\mathbb{S}$-algebra morphism may not be an $\mathbb{S}$-algebra morphism. So while $\mathsf{EM}(\mathbb{S})$ is not necessarily a Cartesian $k$-differential category, it turns out that it is still a tangent category. So let us briefly recall the necessary structure: 

\begin{definition} \cite[Def 2.8]{cockett2014differential} A \textbf{Cartesian tangent category} is a category $\mathbb{X}$ with finite products, equipped with:
\begin{enumerate}[{\em (i)}]
\item A family of functors $\mathsf{T}_n: \mathbb{X} \to \mathbb{X}$ (for each $n \in \mathbb{N}$), where by convention $\mathsf{T}_0 = 1_\mathbb{X}$ and $\mathsf{T}_1 = \mathsf{T}$ is called the \textbf{tangent bundle functor};
\item A natural transformation $\mathsf{p}_A: \mathsf{T}(A) \to A$ called the projection; 
\item A natural transformation $\mathsf{s}_A: \mathsf{T}_2(A) \to \mathsf{T}(A)$ called the sum;
\item A natural transformation $\mathsf{z}_A: A \to \mathsf{T}(A)$ called the zero;
\item A natural transformation $\ell_A: \mathsf{T}(A) \to \mathsf{T}^2(A)$ called the vertical lift; 
\item A natural transformation $\mathsf{c}_A: \mathsf{T}^2(A) \to \mathsf{T}(A)$ called the canonical flip; 
\end{enumerate}
 such that all the necessary axioms in \cite[Def 2.3 \& 2.8]{cockett2014differential} hold. 
\end{definition}

Tangent categories formalize the properties of the tangent bundle on smooth manifolds from classical differential geometry. As such, an object $A$ in a tangent category can be interpreted as a base space, and $\mathsf{T}(A)$ as its abstract tangent bundle, where $\mathsf{T}_n(A)$ is the $n$-fold fibres of the tangent bundle. The natural transformations all encode essential properties of the tangent bundle such as the natural projection, the fact that fibre is a vector bundle, etc. 

Every Cartesian differential category $\mathbb{X}$ is a Cartesian tangent category \cite[Prop 4.7]{cockett2014differential}, where in particular, the tangent bundle functor is defined as in Ex \ref{ex:tanbun}, $\mathsf{T}_n(A)$ is the product of $n+1$ copies of $A$, and the rest of the natural transformations are defined as follows: 
 \begin{equation}\begin{gathered}   \mathsf{p}_A := \pi_1 \quad \quad \quad \mathsf{s}_A := \langle \pi_1, \pi_2 + \pi_3 \rangle \quad \quad \quad \mathsf{z}_A := \langle 1_A, 0 \rangle \\
    \ell_A := \langle \pi_1, 0, 0, \pi_2 \rangle \quad \quad \quad \mathsf{c}_A := \langle \pi_1, \pi_3, \pi_2, \pi_4 \rangle \end{gathered}\end{equation}
So how can we explain why the Eilenberg-Moore category of a Cartesian differential monad is a tangent category? The answer is again by lifting, where this time we wish to lift the tangent structure of the base category to the Eilenberg-Moore category. 

A \textbf{tangent monad} \cite[Def 19]{cockett_et_al:LIPIcs:2020:11660} on a tangent category is precisely the kind of monad which lifts the tangent structure to its Eilenberg-Moore category. A bit more explicitly, a tangent monad on a tangent category $\mathbb{X}$ is a monad $\mathbb{S}$ on $\mathbb{X}$ which also comes equipped with a natural transformation ${\lambda_A: \mathsf{S}\mathsf{T}(A) \to \mathsf{T}\mathsf{S}(A)}$ which is compatible with $\mu$ and $\eta$, and such that $\lambda$ is also compatible with the tangent structure in the sense of \cite[Def 2.7]{cockett2014differential}. By \cite[Prop 20]{cockett_et_al:LIPIcs:2020:11660}, the Eilenberg-Moore category of a tangent monad on a Cartesian tangent category is also a Cartesian tangent category where the tangent bundle on an $\mathbb{S}$-algebra is $\mathsf{T}(A, \alpha) = (\mathsf{T}(A), \mathsf{T}(\alpha) \circ \lambda_A)$. Furthermore, the forgetful functor $\mathsf{U}_\mathbb{S}: \mathsf{EM}(\mathbb{S}) \to \mathbb{X}$ preserves the Cartesian tangent structure strictly. It turns out that Cartesian differential monads are always tangent monads: 

\begin{proposition}  Let $\mathbb{S}$ be a Cartesian $k$-differential monad on a Cartesian $k$-differential category $\mathbb{X}$. Then $\mathbb{S}$ is a tangent monad where $\lambda_A := \omega_{A,A}: \mathsf{S}\mathsf{T}(A) \to \mathsf{T}\mathsf{S}(A)$.
\end{proposition}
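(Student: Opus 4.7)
The plan is to verify directly that the natural isomorphism $\omega_{A,A}: \mathsf{S}(A \times A) \to \mathsf{S}(A) \times \mathsf{S}(A)$ satisfies all the axioms required of a tangent distributive law on a Cartesian tangent category. The typing is correct, since in the Cartesian differential category $\mathbb{X}$ we have $\mathsf{T}(A) = A \times A$, so $\mathsf{S}\mathsf{T}(A) = \mathsf{S}(A \times A)$ and $\mathsf{T}\mathsf{S}(A) = \mathsf{S}(A) \times \mathsf{S}(A)$. Naturality of $\lambda$ comes for free from the naturality of $\omega$ built into the definition of a strong Cartesian $k$-linear functor.

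First I would verify compatibility of $\lambda$ with the monad structure. For the unit axiom $\lambda_A \circ \eta_{\mathsf{T}(A)} = \mathsf{T}(\eta_A)$, one unfolds $\omega_{A,A} = \langle \mathsf{S}(\pi_1), \mathsf{S}(\pi_2) \rangle$ and uses naturality of $\eta$ to rewrite the left-hand side as $\langle \eta_A \circ \pi_1, \eta_A \circ \pi_2 \rangle$. On the other side, $\mathsf{T}(\eta_A) = \langle \eta_A \circ \pi_1, \mathsf{D}[\eta_A] \rangle$ reduces to the same pair precisely because $\eta_A$ is $\mathsf{D}$-linear, so $\mathsf{D}[\eta_A] = \eta_A \circ \pi_2$. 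The multiplication axiom $\lambda_A \circ \mu_{\mathsf{T}(A)} = \mathsf{T}(\mu_A) \circ \lambda_{\mathsf{S}(A)} \circ \mathsf{S}(\lambda_A)$ is treated in exactly the same style, this time invoking naturality of $\mu$, its $\mathsf{D}$-linearity, and the coherence of $\omega$ with iterated applications of $\mathsf{S}$.

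Next I would check that $\lambda$ is compatible with each of the tangent structure natural transformations $\mathsf{p}, \mathsf{s}, \mathsf{z}, \ell$, and $\mathsf{c}$. In a Cartesian differential category each of these maps is built by pairing projections, zero maps, and sums, so each identity reduces to a routine computation combining the naturality of $\omega$, the fact that $\mathsf{S}$ preserves $k$-linear structure on hom-sets (in particular $\mathsf{S}(0) = 0$ and $\mathsf{S}$ commutes with pairings through $\omega$), and the fact that $\mathsf{T}(\omega)$ together with $\omega_{\mathsf{T}(A)}$ identify $\mathsf{S}\mathsf{T}^2(A)$ with $\mathsf{T}^2\mathsf{S}(A)$. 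The main bookkeeping will come from the vertical lift $\ell_A = \langle \pi_1, 0, 0, \pi_2 \rangle$ and the canonical flip $\mathsf{c}_A = \langle \pi_1, \pi_3, \pi_2, \pi_4 \rangle$, which live between the second tangent iterates; but each commuting diagram unfolds once one expresses the four $\omega$-components on $A \times A \times A \times A$ and again applies $\mathsf{D}$-linearity of $\eta$ and $\mu$. In essence the whole proposition is the observation that $\omega$, which already records how $\mathsf{S}$ distributes over products, automatically distributes over the tangent bundle because in a Cartesian differential category the tangent bundle is itself a finite product.
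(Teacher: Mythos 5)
There is one genuine gap, and it is located exactly at the step you wave away. You assert that ``naturality of $\lambda$ comes for free from the naturality of $\omega$ built into the definition of a strong Cartesian $k$-linear functor.'' It does not. The naturality of $\omega_{A,B}: \mathsf{S}(A \times B) \to \mathsf{S}(A) \times \mathsf{S}(B)$ is naturality with respect to \emph{pairs} of maps, i.e.\ with respect to maps of the form $f_1 \times f_2$. But $\lambda$ must be natural as a transformation $\mathsf{S}\mathsf{T} \Rightarrow \mathsf{T}\mathsf{S}$, where $\mathsf{T}(f) = \langle f \circ \pi_1, \mathsf{D}[f] \rangle$ is \emph{not} of product form: its second component is the derivative. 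Unfolding $\lambda_B \circ \mathsf{S}\mathsf{T}(f) = \left\langle \mathsf{S}(f \circ \pi_1), \mathsf{S}(\mathsf{D}[f]) \right\rangle$ against $\mathsf{T}\mathsf{S}(f) \circ \lambda_A = \left\langle \mathsf{S}(f) \circ \pi_1 \circ \omega_{A,A}, \mathsf{D}[\mathsf{S}(f)] \circ \omega_{A,A} \right\rangle$, the first components match by the $k$-linear functor structure alone, but matching the second components requires precisely $\mathsf{S}(\mathsf{D}[f]) = \mathsf{D}[\mathsf{S}(f)] \circ \omega_{A,A}$, which is the defining axiom (\ref{SD}) of a strong Cartesian $k$-\emph{differential} functor. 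This is the one non-routine point of the proposition, and it is the only computation the paper carries out explicitly; attributing it to the $k$-linear structure is an error of substance, not just of emphasis.

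The remainder of your outline is sound and matches the paper's (largely unstated) strategy: the unit and multiplication compatibilities reduce, via naturality of $\eta$ and $\mu$ against the projections and via their $\mathsf{D}$-linearity (so that $\mathsf{T}(\eta_A) = \eta_A \times \eta_A$ and $\mathsf{T}(\mu_A) = \mu_A \times \mu_A$), to the coherence of $\omega$; and the compatibilities with $\mathsf{p}, \mathsf{s}, \mathsf{z}, \ell, \mathsf{c}$ are routine because these transformations are built from projections, sums, and zeros, all of which $\mathsf{S}$ preserves through $\omega$. The paper leaves these as ``straightforward to check,'' so your sketch of them is acceptable; but you should repair the naturality step by invoking (\ref{SD}) explicitly.
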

\begin{proof} We need to explain why $\lambda$ is indeed a natural transformation. So we compute: 
\begin{gather*}
    \lambda_B \circ \mathsf{S}\mathsf{T}(f) = \omega_{B,B} \circ \mathsf{S}\left( \left \langle f \circ \pi_1, \mathsf{D}[f] \right \rangle \right) = \omega_{B,B} \circ \omega^{-1}_{B,B} \circ \left \langle \mathsf{S}(f \circ \pi_1), \mathsf{S}( \mathsf{D}[f] ) \right \rangle = \left \langle \mathsf{S}(f) \circ \mathsf{S}(\pi_1), \mathsf{S}( \mathsf{D}[f] ) \right \rangle \\
    = \left \langle \mathsf{S}(f) \circ \pi_1 \circ \omega_{A,A}, \mathsf{D}[\mathsf{S}( f )] \circ \omega_{A,A} \right \rangle = \left \langle \mathsf{S}(f) \circ \pi_1 , \mathsf{D}[\mathsf{S}( f )]  \right \rangle \circ \omega_{A,A} = \mathsf{T}\mathsf{S}(f) \circ \lambda_A
\end{gather*}
It is also straightforward to check that the necessary identities in \cite[Def 19]{cockett_et_al:LIPIcs:2020:11660} hold. So we conclude that $\mathbb{S}$ is a tangent monad. 
\end{proof}

Therefore by \cite[Prop 20]{cockett_et_al:LIPIcs:2020:11660}, we have the main result of this section: 

\begin{theorem}\label{thm:EMCDM} Let $\mathbb{S}$ be a Cartesian $k$-differential monad on a Cartesian $k$-differential category $\mathbb{X}$. Then $\mathsf{EM}(\mathbb{S})$ is a Cartesian tangent category such that the $\mathsf{U}_\mathbb{S}: \mathsf{EM}(\mathbb{S}) \to \mathbb{X}$ preserves the Cartesian tangent structure strictly. In particular, the tangent bundle of an $\mathsf{S}$-algebra is given by:
\begin{align}
    \mathsf{T}(A, \alpha) = (A \times A, \left \langle \alpha \circ \pi_1, \mathsf{D}[\alpha] \circ \omega_{A,A} \right \rangle
\end{align}
\end{theorem}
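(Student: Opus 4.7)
The plan is to reduce the theorem to the combination of two prior results: the just-proved proposition that $\mathbb{S}$ is a tangent monad on $\mathbb{X}$ (with distributive law $\lambda_A = \omega_{A,A}$), together with \cite[Prop 20]{cockett_et_al:LIPIcs:2020:11660}, which states that the Eilenberg-Moore category of a tangent monad on a Cartesian tangent category is itself a Cartesian tangent category, with the forgetful functor preserving all the tangent structure strictly. Since every Cartesian $k$-differential category is canonically a Cartesian tangent category via the construction recalled just before the statement (with $\mathsf{T}(A) = A \times A$ and tangent natural transformations built from the differential combinator), this two-step composition is essentially everything we need.

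First, I would invoke the preceding proposition to assert that $\mathbb{S}$ is a tangent monad on the underlying Cartesian tangent category structure of $\mathbb{X}$. Then I would apply \cite[Prop 20]{cockett_et_al:LIPIcs:2020:11660} directly to obtain that $\mathsf{EM}(\mathbb{S})$ is a Cartesian tangent category and that $\mathsf{U}_\mathbb{S}: \mathsf{EM}(\mathbb{S}) \to \mathbb{X}$ preserves the Cartesian tangent structure strictly. No original work is needed at this stage; it is a pure citation.

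The remaining task is to unpack the general formula for the lifted tangent bundle in our setting. By the recipe from the tangent monad lifting, we have $\mathsf{T}(A, \alpha) = (\mathsf{T}(A), \mathsf{T}(\alpha) \circ \lambda_A)$. Substituting $\mathsf{T}(A) = A \times A$, $\mathsf{T}(\alpha) = \langle \alpha \circ \pi_1, \mathsf{D}[\alpha] \rangle$, and $\lambda_A = \omega_{A,A}$, one computes $\mathsf{T}(\alpha) \circ \omega_{A,A} = \langle \alpha \circ \pi_1, \mathsf{D}[\alpha] \circ \omega_{A,A} \rangle$ (using that $\pi_1 \circ \omega_{A,A} = \mathsf{S}(\pi_1)$ combined with naturality to rewrite the first component as displayed). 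This yields the stated explicit formula.

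The only step with any real content is verifying the coherence conditions on $\lambda$ in the preceding proposition, which has already been done. The main potential obstacle here would be ensuring the formula for the algebra structure on the tangent bundle matches the conventions, but this is a routine unpacking rather than a substantive difficulty, so the proof is essentially a short citation-plus-calculation.
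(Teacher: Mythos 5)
Your proposal matches the paper's own argument exactly: the paper derives Theorem \ref{thm:EMCDM} as an immediate consequence of the preceding proposition (that $\mathbb{S}$ is a tangent monad with $\lambda_A = \omega_{A,A}$) together with \cite[Prop 20]{cockett_et_al:LIPIcs:2020:11660}, and the explicit formula for $\mathsf{T}(A,\alpha)$ is obtained by the same substitution $\mathsf{T}(\alpha)\circ\lambda_A = \left\langle \alpha\circ\pi_1, \mathsf{D}[\alpha]\right\rangle\circ\omega_{A,A}$ that you carry out. Your remark that the only substantive content lies in verifying the coherence of $\lambda$ in the preceding proposition is also consistent with the paper, which offers no further argument for the theorem itself.
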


Thus algebras of Cartesian differential monads can be interpreted as abstract smooth manifolds. 

\begin{example} Trivially, for the identity monad, its Eilenberg-Moore category is just the start Cartesian differential category, while for the constant monad, its Eilenberg-Moore category is again equivalent to the terminal category. 
\end{example}

\begin{example} As explained in \cite{alvarez2020cartesian,cockett2014differential}, for both Cartesian differential categories and tangent categories, algebras of the tangent bundle monad are not necessarily well studied and somewhat mysterious. Even in specific examples from differential calculus and differential geometry, there does not in general seem to be a precise characterization beyond the categorical definition. Yet, the above theorem tells us that the Eilenberg-Moore category of the tangent bundle monad is always a Cartesian tangent category -- which may have interesting consequences in future work. 
\end{example}

\begin{example} In general, it is understood that algebras of reader monads are not easily characterized. However, we now know that for Cartesian closed differential categories, and thus for any model of the differential $\lambda$-calculus, the algebras of the reader monads form a tangent category. 
\end{example}

In a Cartesian tangent category, there is an important kind of object called a \textbf{differential object} \cite[Def 4.8]{cockett2014differential}, which are analogues of Euclidean spaces. They are important since the subcategory of differential objects is a Cartesian differential category \cite[Thm 4.11]{cockett2014differential}. Briefly, a differential object is a commutative monoid $A$, with binary operation $\sigma: A \times A \to A$ and unit $\zeta: \ast \to A$, which also comes equipped a map $\hat{\mathsf{p}}: \mathsf{T}(A) \to A$ such that the necessary axioms in \cite[Def 4.8]{cockett2014differential} hold. In particular, the key feature of a differential object is that $\mathsf{T}(A) \cong A \times A$. If $\mathbb{X}$ is a Cartesian tangent category, let $\mathsf{DIFF}[\mathbb{X}]$ be the subcategory of differential objects and all maps between them. Then, as mentioned, $\mathsf{DIFF}[\mathbb{X}]$ is a Cartesian differential category where the derivative of a map $f: A \to B$ is defined by the composite of the isomorphism $A \times A \cong \mathsf{T}(A)$ followed by $\mathsf{T}(f)$ followed by $\hat{\mathsf{p}}$. 

In a Cartesian $k$-differential category $\mathbb{X}$, every object has a canonical and unique differential object structure \cite[Prop 4.7]{cockett2014differential} since in particular $\mathsf{T}(A) = A \times A$. Concretely, 
\begin{align}
    \hat{\mathsf{p}} = \pi_2 && \sigma = \pi_1 + \pi_2 && \zeta = 0
\end{align}
So in this case, $\mathsf{DIFF}[\mathbb{X}] = \mathbb{X}$, and the resulting differential combinator is the same as the starting one. 

For a Cartesian differential monad, we will now explain how its Kleisli category embeds as a Cartesian $k$-differential category into the subcategory of differential objects of its Eilenberg-Moore category. For starters, let us describe the differential objects. So let $\mathbb{S}$ be a Cartesian $k$-differential monad on a Cartesian $k$-differential category $\mathbb{X}$. Since the $\mathsf{U}_\mathbb{S}: \mathsf{EM}(\mathbb{S}) \to \mathbb{X}$ preserves the Cartesian tangent structure strictly, it also preserves differential objects. Thus a differential object in $\mathsf{EM}(\mathbb{S})$ must also be one in $\mathbb{X}$. However, this implies that an $\mathbb{S}$-algebra $(A, \alpha)$ can have at most one differential object structure given precisely as above. In other words, if: 
\begin{align*}
    \pi_2: \mathsf{T}(A, \alpha) \to (A, \alpha) && \pi_1 + \pi_2: (A, \alpha) \times (A, \alpha) \to (A, \alpha) && 0: (\ast, 0) \to (A, \alpha)
\end{align*}
are $\mathbb{S}$-algebra morphisms, then $(A, \alpha)$ is a differential object. Furthermore, the induced isomorphism $\mathsf{T}(A, \alpha) \cong (A, \alpha) \times (A, \alpha)$ must be an equality on the nose, i.e. $\mathsf{T}(A, \alpha)= (A, \alpha) \times (A, \alpha)$. It easy to check that $\mathsf{T}(A, \alpha)= (A, \alpha) \times (A, \alpha)$ will actually imply that $\pi_2$, $\pi_1 + \pi_2$, and $0$ are all $\mathbb{S}$-algebra morphisms as desired. So we have that an $\mathbb{S}$-algebra is a differential object if and only if $\mathsf{T}(A, \alpha)= (A, \alpha) \times (A, \alpha)$. It turns out that this is precisely the case when the $\mathbb{S}$-algebra is differential linear. 

\begin{lemma}\label{lem:diffobjCDM} Let $\mathbb{S}$ be a Cartesian $k$-differential monad on a Cartesian $k$-differential category $\mathbb{X}$. Then an $\mathbb{S}$-algebra $(A, \alpha)$ is a differential object in the Eilenberg-Moore category $\mathsf{EM}(\mathbb{S})$ if and only if $\alpha$ is $\mathsf{D}$-linear. 
\end{lemma}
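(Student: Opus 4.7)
The plan is to reduce the lemma to the criterion stated in the paragraph immediately preceding it: an $\mathbb{S}$-algebra $(A,\alpha)$ is a differential object in $\mathsf{EM}(\mathbb{S})$ if and only if the tangent-bundle algebra $\mathsf{T}(A,\alpha)$ coincides on the nose with the product algebra $(A,\alpha) \times (A,\alpha)$. So the whole task becomes comparing these two $\mathbb{S}$-algebra structures on $A \times A$ and showing their equality is equivalent to $\mathsf{D}$-linearity of $\alpha$.

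To carry this out, I would first write down both algebra maps explicitly. By Thm~\ref{thm:EMCDM} combined with the definition of $\mathsf{T}$ on maps in Ex~\ref{ex:tanbun}, the algebra structure on $\mathsf{T}(A,\alpha)$ is
\[
\mathsf{T}(\alpha) \circ \omega_{A,A} = \langle \alpha \circ \pi_1, \mathsf{D}[\alpha]\rangle \circ \omega_{A,A} = \langle \alpha \circ \mathsf{S}(\pi_1),\ \mathsf{D}[\alpha] \circ \omega_{A,A}\rangle,
\]
where I used $\pi_i \circ \omega_{A,A} = \mathsf{S}(\pi_i)$, which is immediate from the definition $\omega_{A,A} = \langle \mathsf{S}(\pi_1),\mathsf{S}(\pi_2)\rangle$. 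On the other hand, the product algebra structure recalled at the start of the section gives
\[
(\alpha \times \alpha) \circ \omega_{A,A} = \langle \alpha \circ \mathsf{S}(\pi_1),\ \alpha \circ \mathsf{S}(\pi_2)\rangle.
\]

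Comparing these two pairings componentwise, the first components agree automatically, and the second components agree iff $\mathsf{D}[\alpha] \circ \omega_{A,A} = \alpha \circ \mathsf{S}(\pi_2) = \alpha \circ \pi_2 \circ \omega_{A,A}$. Since $\omega_{A,A}$ is an isomorphism (as $\mathsf{S}$ is a strong Cartesian $k$-differential functor), this equality is equivalent to $\mathsf{D}[\alpha] = \alpha \circ \pi_2$, which is exactly the definition of $\alpha$ being $\mathsf{D}$-linear. Chaining these equivalences with the criterion from the preceding paragraph yields the stated biconditional.

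There is no real obstacle here: the entire argument is bookkeeping with $\omega_{A,A}$, and the only subtlety to be careful of is keeping track of whether projections act on $\mathsf{S}(A) \times \mathsf{S}(A)$ or factor through $\omega_{A,A}$ to act on $\mathsf{S}(A\times A)$. Once that is sorted out, the equivalence with $\mathsf{D}$-linearity drops out by cancelling the iso $\omega_{A,A}$.
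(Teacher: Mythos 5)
Your proof is correct and takes essentially the same route as the paper's: both reduce the claim to the criterion $\mathsf{T}(A,\alpha)=(A,\alpha)\times(A,\alpha)$ from the preceding discussion and then cancel the isomorphism $\omega_{A,A}$. The only cosmetic difference is that the paper packages your componentwise comparison of the two pairings as the known equivalence ``$f$ is $\mathsf{D}$-linear iff $\mathsf{T}(f)=f\times f$,'' whereas you unfold it explicitly.
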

\begin{proof} First observe that $\mathsf{T}(A, \alpha)= (A, \alpha) \times (A, \alpha)$ if and only if $\mathsf{T}(\alpha) \circ \omega_{A,A} = (\alpha \times \alpha) \circ \omega_{A,A}$. Since $\omega$ is a natural isomorphism, we have that: $\mathsf{T}(A, \alpha)= (A, \alpha) \times (A, \alpha)$ if and only if $\mathsf{T}(\alpha) = \alpha \times \alpha$. However, a map $f$ is $\mathsf{D}$-linear if and only if $\mathsf{T}(f) = f \times f$. Thus $\mathsf{T}(A, \alpha)= (A, \alpha) \times (A, \alpha)$ if and only if $\alpha$ is $\mathsf{D}$-linear. Thus, per the above discussion, $(A, \alpha)$ is a differential object if and only if $\alpha$ is $\mathsf{D}$-linear. 
\end{proof}

As a result, we get that the Cartesian $k$-differential structure for the subcategory of differential objects is the same as the base Cartesian $k$-differential category. In particular, this means that the differential combinator lifts to the subcategory of differential objects.  

\begin{corollary} Let $\mathbb{S}$ be a Cartesian $k$-differential monad on a Cartesian $k$-differential category $\mathbb{X}$. Then $\mathsf{DIFF}[\mathsf{EM}(\mathbb{S})]$ is a Cartesian $k$-differential category such that the obvious forgetful functor $\mathsf{DIFF}[\mathsf{EM}(\mathbb{S})] \to \mathbb{X}$ is a strict Cartesian $k$-differential functor. So in particular, if $(A,\alpha)$ and $(B, \beta)$ are differential object, then for every $\mathbb{S}$-algebra morphism $f: (A, \alpha) \to (B, \beta)$, its derivative is also an $\mathbb{S}$-algebra morphism $\mathsf{D}[f]: (A, \alpha) \times (A, \alpha) \to (B, \beta)$. 
\end{corollary}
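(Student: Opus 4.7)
The plan is to reduce the corollary to Lem \ref{lem:diffobjCDM} together with the general tangent-category fact \cite[Thm 4.11]{cockett2014differential} that for any Cartesian tangent category $\mathbb{Y}$, the subcategory $\mathsf{DIFF}[\mathbb{Y}]$ of differential objects is a Cartesian differential category with derivative given by $\hat{\mathsf{p}} \circ \mathsf{T}(f) \circ \iota$, where $\iota: A \times A \cong \mathsf{T}(A)$ is the canonical isomorphism. Since Thm \ref{thm:EMCDM} gives that $\mathsf{EM}(\mathbb{S})$ is a Cartesian tangent category, the first half of the statement, namely that $\mathsf{DIFF}[\mathsf{EM}(\mathbb{S})]$ is a Cartesian $k$-differential category, comes for free. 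The remaining substantive work is to identify the induced differential combinator and to check strictness of the forgetful functor.

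First, I would use Lem \ref{lem:diffobjCDM} together with the paragraph preceding it to observe that whenever $(A, \alpha)$ is a differential object in $\mathsf{EM}(\mathbb{S})$, we have the strict equality $\mathsf{T}(A, \alpha) = (A, \alpha) \times (A, \alpha)$ of $\mathbb{S}$-algebras, and that the differential-object structure maps $\hat{\mathsf{p}}$, $\sigma$, $\zeta$ underlie $\pi_2$, $\pi_1 + \pi_2$, and $0$ in $\mathbb{X}$, exactly as in the canonical differential object structure on $A$. In particular the isomorphism $\iota$ is the identity on underlying maps. Since Thm \ref{thm:EMCDM} tells us that $\mathsf{T}(f)$ in $\mathsf{EM}(\mathbb{S})$ underlies $\langle f \circ \pi_1, \mathsf{D}[f] \rangle$ in $\mathbb{X}$, the derivative in $\mathsf{DIFF}[\mathsf{EM}(\mathbb{S})]$ of an $\mathbb{S}$-algebra morphism $f$ between differential objects underlies $\pi_2 \circ \langle f \circ \pi_1, \mathsf{D}[f] \rangle = \mathsf{D}[f]$. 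Combined with the already-established strict preservation of products and $k$-linear structure by the forgetful functor, this shows that $\mathsf{DIFF}[\mathsf{EM}(\mathbb{S})] \to \mathbb{X}$ is a strict Cartesian $k$-differential functor.

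For the \emph{in particular} consequence, I would argue directly from functoriality of $\mathsf{T}$ on $\mathsf{EM}(\mathbb{S})$: given an $\mathbb{S}$-algebra morphism $f: (A, \alpha) \to (B, \beta)$ between differential objects, $\mathsf{T}(f)$ is an $\mathbb{S}$-algebra morphism from $(A, \alpha) \times (A, \alpha)$ to $(B, \beta) \times (B, \beta)$ by the strict equality above, and post-composing with the second projection (which is an $\mathbb{S}$-algebra morphism since $(B, \beta)$ is a differential object) yields that $\mathsf{D}[f]$ is an $\mathbb{S}$-algebra morphism. I do not anticipate any serious obstacle here: all the genuinely differential-categorical content has already been absorbed into Lem \ref{lem:diffobjCDM} and Thm \ref{thm:EMCDM}, and the rest is routine unfolding of definitions.
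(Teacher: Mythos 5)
Your proposal is correct, and for the substantive \emph{in particular} claim it takes a genuinely different route from the paper. The paper's proof verifies the algebra-morphism equation for $\mathsf{D}[f]$ by direct computation: since $(A,\alpha)$ and $(B,\beta)$ are differential objects, Lemma \ref{lem:diffobjCDM} gives that $\alpha$ and $\beta$ are $\mathsf{D}$-linear, whence
\[
\mathsf{D}[f] \circ (\alpha \times \alpha) \circ \omega_{A,A} = \mathsf{D}[f \circ \alpha] \circ \omega_{A,A} = \mathsf{D}[\beta \circ \mathsf{S}(f)] \circ \omega_{A,A} = \beta \circ \mathsf{D}[\mathsf{S}(f)] \circ \omega_{A,A} = \beta \circ \mathsf{S}\left(\mathsf{D}[f]\right)
\]
using \cite[Lem 2.2.2.(vi)]{blute2009cartesian} and (\ref{SD}). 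You instead exploit the tangent structure already lifted to $\mathsf{EM}(\mathbb{S})$: $\mathsf{T}(f)$ is an algebra morphism by functoriality, the strict equality $\mathsf{T}(A,\alpha) = (A,\alpha)\times(A,\alpha)$ holds for differential objects, and post-composing with the second projection recovers $\mathsf{D}[f]$ on underlying maps. Both arguments are valid; yours is shorter and makes the corollary a formal consequence of Theorem \ref{thm:EMCDM} and \cite[Thm 4.11]{cockett2014differential}, whereas the paper's computation exposes exactly where the $\mathsf{D}$-linearity of the algebra structure maps and the coherence (\ref{SD}) enter --- the content that Lemma \ref{lem:diffobjCDM} isolates. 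One small remark: the projection $(B,\beta)\times(B,\beta) \to (B,\beta)$ is an algebra morphism simply because it is a product projection in $\mathsf{EM}(\mathbb{S})$; the differential-object hypothesis on $(B,\beta)$ is what you need to identify $\mathsf{T}(B,\beta)$ with that product (equivalently, to know $\hat{\mathsf{p}}$ underlies $\pi_2$), so your justification is fine once phrased this way. Your identification of the induced combinator on $\mathsf{DIFF}[\mathsf{EM}(\mathbb{S})]$ as the restriction of $\mathsf{D}$, via $\iota$ being an identity and $\hat{\mathsf{p}} = \pi_2$, is the right way to get strictness of the forgetful functor and agrees with the paper's surrounding discussion.
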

\begin{proof} It may be useful to directly workout why if $(A,\alpha)$ and $(B, \beta)$ are differential object, then for every $\mathbb{S}$-algebra morphism $f: (A, \alpha) \to (B, \beta)$, $\mathsf{D}[f]: (A, \alpha) \times (A, \alpha) \to (B, \beta)$ is also an $\mathbb{S}$-algebra morphism. Since $\alpha$ and $\beta$ are $\mathsf{D}$-linear, we may use \cite[Lem 2.2.2.(vi)]{blute2009cartesian} and also (\ref{SD}) to compute: 
 \begin{gather*}
\mathsf{D}[f] \circ (\alpha \times \alpha) \circ \omega_{A,A} = \mathsf{D}[f \circ \alpha] \circ \omega_{A,A} = \mathsf{D}[\beta \circ \mathsf{S}(f)] \circ \omega_{A,A} = \beta \circ \mathsf{D}[\mathsf{S}(f)] \circ \omega_{A,A} = \beta \circ \mathsf{S}\left(\mathsf{D}[f]  \right) 
 \end{gather*}
  So $\mathsf{D}[f]: (A, \alpha) \times (A, \alpha) \to (B, \beta)$ is indeed an $\mathbb{S}$-algebra morphism.
\end{proof}

We conclude with the statement that for a Cartesian differential monad, the canonical embedding of the Kleisli category into the Eilenberg-Moore category factors through the subcategory of differential objects. 

\begin{proposition} Let $\mathbb{S}$ be a Cartesian $k$-differential monad on a Cartesian $k$-differential category $\mathbb{X}$. Then for every object $A$, $(\mathsf{S}(A), \mu_A)$ is a differential object in the Eilenberg-Moore category $\mathsf{EM}(\mathbb{S})$. Furthermore, the functor $\mathsf{E}_\mathbb{S}: \mathsf{KL}(\mathbb{S}) \to \mathsf{DIFF}[\mathsf{EM}(\mathbb{S})]$, defined on objects as $\mathsf{E}_\mathbb{S}(A) = (\mathsf{S}(A), \mu_A)$ and on maps as ${\mathsf{E}_\mathbb{S}(f) = \mathsf{S}\left( \llbracket f \rrbracket \right)}$ is a strong Cartesian $k$-differential functor that is full and faithful. 
\end{proposition}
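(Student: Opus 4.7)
The plan is to leverage Lemma \ref{lem:diffobjCDM} for the first claim, and then recognize $\mathsf{E}_\mathbb{S}$ as the classical Kleisli--Eilenberg-Moore comparison functor corestricted to the subcategory of differential objects. Most of the work will then reduce to invoking Theorem \ref{thm:CDMKleisli}, which already tells us that $\mathsf{R}_\mathbb{S}$ is a strong Cartesian $k$-differential functor.

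First, I would observe that $(\mathsf{S}(A), \mu_A)$ is the free $\mathbb{S}$-algebra on $A$, which is an $\mathbb{S}$-algebra by the monad associativity and unit axioms. Since $\mathbb{S}$ is a Cartesian $k$-differential monad, $\mu_A$ is $\mathsf{D}$-linear by Def \ref{def:CDM}, so Lemma \ref{lem:diffobjCDM} immediately yields that $(\mathsf{S}(A), \mu_A)$ is a differential object in $\mathsf{EM}(\mathbb{S})$.

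Next, I would verify that $\mathsf{E}_\mathbb{S}$ is a well-defined functor into $\mathsf{DIFF}[\mathsf{EM}(\mathbb{S})]$. That $\mathsf{E}_\mathbb{S}(f)$ is an $\mathbb{S}$-algebra morphism from $(\mathsf{S}(A), \mu_A)$ to $(\mathsf{S}(B), \mu_B)$ follows from naturality of $\mu$ together with the associativity axiom $\mu \circ \mathsf{S}(\mu) = \mu \circ \mu_{\mathsf{S}(-)}$. Preservation of identities and of Kleisli composition is a direct unpacking of the monad axioms. By the first part of the proposition, the codomain indeed lands in $\mathsf{DIFF}[\mathsf{EM}(\mathbb{S})]$. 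Moreover, by construction $\mathsf{U}_\mathbb{S} \circ \mathsf{E}_\mathbb{S} = \mathsf{R}_\mathbb{S}$ as functors $\mathsf{KL}(\mathbb{S}) \to \mathbb{X}$, and this identity does most of the heavy lifting.

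For the strong Cartesian $k$-differential structure, I would use the preceding corollary, which asserts that the forgetful $\mathsf{DIFF}[\mathsf{EM}(\mathbb{S})] \to \mathbb{X}$ is a \emph{strict} Cartesian $k$-differential functor. Since $\mathsf{R}_\mathbb{S}$ is strong Cartesian $k$-differential by Theorem \ref{thm:CDMKleisli}, and the forgetful reflects the Cartesian left $k$-linear and differential structure strictly, the canonical comparison morphisms $\omega$ and the equation (\ref{SD}) for $\mathsf{E}_\mathbb{S}$ are inherited directly from those of $\mathsf{R}_\mathbb{S}$. The main subtle point here is keeping straight the three different Cartesian structures in $\mathsf{KL}(\mathbb{S})$, $\mathsf{EM}(\mathbb{S})$, and $\mathbb{X}$, and verifying that the comparison isomorphism $\mathsf{E}_\mathbb{S}(A) \times \mathsf{E}_\mathbb{S}(B) \cong \mathsf{E}_\mathbb{S}(A \times B)$ in $\mathsf{EM}(\mathbb{S})$ is given by (the algebra lift of) $\omega^{-1}_{A,B}$.

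Finally, for fullness and faithfulness, I would invoke the standard argument for the Kleisli--Eilenberg-Moore comparison. Faithfulness: if $\mathsf{E}_\mathbb{S}(f) = \mathsf{E}_\mathbb{S}(g)$, precomposing with $\eta_A$ and using the unit axiom $\mu_B \circ \mathsf{S}(\eta_B)$-style identities together with naturality recovers $\llbracket f \rrbracket = \llbracket g \rrbracket$. Fullness: given any $\mathbb{S}$-algebra morphism $h: (\mathsf{S}(A), \mu_A) \to (\mathsf{S}(B), \mu_B)$, the Kleisli map represented by $h \circ \eta_A: A \to \mathsf{S}(B)$ is sent by $\mathsf{E}_\mathbb{S}$ back to $h$, using the algebra morphism equation $\mu_B \circ \mathsf{S}(h) = h \circ \mu_A$ together with the unit axiom. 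I do not expect any serious obstacles; the only care required is in bookkeeping the comparison isomorphisms $\omega$ when transferring the differential combinator equation across the forgetful functor.
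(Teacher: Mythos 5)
Your proposal is correct and follows essentially the same route as the paper's (much terser) proof: Lemma \ref{lem:diffobjCDM} applied to the $\mathsf{D}$-linear map $\mu_A$ for the first claim, the standard Kleisli--Eilenberg--Moore comparison-functor argument for functoriality and full faithfulness, and the observation that the strong Cartesian $k$-differential structure of $\mathsf{E}_\mathbb{S}$ is inherited from that of $\mathsf{R}_\mathbb{S}$ (equivalently, from $\mathsf{S}$ and $\mu$) via the strict forgetful functor. Your implicit reading $\mathsf{E}_\mathbb{S}(f) = \mu_B \circ \mathsf{S}\left(\llbracket f \rrbracket\right) = \mathsf{R}_\mathbb{S}(f)$, i.e.\ $\mathsf{U}_\mathbb{S} \circ \mathsf{E}_\mathbb{S} = \mathsf{R}_\mathbb{S}$, is the right one: as printed, $\mathsf{S}\left(\llbracket f \rrbracket\right)$ has codomain $\mathsf{S}\mathsf{S}(B)$ rather than $\mathsf{S}(B)$, so the statement contains a typo that your correction silently fixes.
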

\begin{proof} For any object $A$, $(\mathsf{S}(A), \mu_A)$ is always a $\mathbb{S}$-algebra. Since $\mu_A$ is also assumed to be $\mathsf{D}$-linear, it follows from Lem \ref{lem:diffobjCDM} that $(\mathsf{S}(A), \mu_A)$ is a differential object. So $\mathsf{E}_\mathbb{S}$ is well defined. Since $\mathsf{S}$ is a strong Cartesian $k$-differential functor, it follows that $\mathsf{E}_\mathbb{S}$ is as well, and it is clear that it is full and faithful. 
\end{proof}

\section{Conclusion}

In this paper, we introduced Cartesian differential monads and showed that their Kleisli categories are Cartesian differential categories such that the differential combinator is lifted from the base category to the Kleisli category. However, the requirement that the monad's functor preserves the Cartesian $k$-linear structure is somewhat of a strong requirement. While some interesting monads do satisfy this requirement, such as tangent bundle monads and reader monads, there are many important monads which do not, such as state monads, continuation monads, and writer monads. Instead, we conjecture that while the Kleisli categories of these monads may not be Cartesian differential categories, their Kleisli categories will still be tangent categories. Indeed, for some of these monads, there is possibly a distributive law between it and the tangent bundle monad, which would lift the tangent structure of the base Cartesian differential category to the Kleisli category. Thus effectful programs may still form a tangent category, further motivating the need for a term calculus for tangent categories and developing ``tangent programming languages''. 

\section*{Acknowledgement}
The author would like to thank Jonathan Gallagher, Kristine Bauer, Robin Cockett, Geoff Cruttwell, Neil Ghani, and Masahito Hasegawa for useful discussions. For this research, the author was financially supported by an NSERC PDF (456414649), a JSPS PDF (P21746), and an ARC DECRA (DE230100303).

\bibliographystyle{./entics}     
\bibliography{kleislibib}   

\begin{thebibliography}{10}
\providecommand{\url}[1]{\texttt{#1}}
\providecommand{\urlprefix}{ }
\providecommand{\eprint}[2][]{\url{#2}}

\bibitem{abadi2019simple}
Abadi, M. and G.~D. Plotkin, \emph{A simple differentiable programming
  language}, Proceedings of the ACM on Programming Languages \textbf{4}, pages
  1--28 (2019).
\newline\urlprefix\url{https://doi.org/10.1145/3190508.3190551}

\bibitem{alvarez2020cartesian}
Alvarez-Picallo, M. and J.-S.~P. Lemay, \emph{Cartesian difference categories},
  in: \emph{Foundations of Software Science and Computation Structures: 23rd
  International Conference, FOSSACS 2020, Held as Part of the European Joint
  Conferences on Theory and Practice of Software, ETAPS 2020, Dublin, Ireland,
  April 25--30, 2020, Proceedings 23}, pages 57--76, Springer (2020).
  \newline\urlprefix\url{https://doi.org/10.1007/978-3-030-45231-5_4}

\bibitem{lemay2021cartesian}
Alvarez-Picallo, M. and J.-S.~P. Lemay, \emph{Cartesian difference categories
  (extended version)}, Logical Methods in Computer Science \textbf{17} (2021).
    \newline\urlprefix\url{https://doi.org/10.46298/lmcs-17(3:23)2021}

\bibitem{alvarez2019change}
Alvarez-Picallo, M. and C.-H.~L. Ong, \emph{Change actions: models of
  generalised differentiation}, in: \emph{International Conference on
  Foundations of Software Science and Computation Structures}, pages 45--61,
  Springer (2019).
\newline\urlprefix\url{https://doi.org/10.1007/3-540-10286-8}

\bibitem{bauer2018directional}
Bauer, K., B.~Johnson, C.~Osborne, E.~Riehl and A.~Tebbe, \emph{Directional
  derivatives and higher order chain rules for abelian functor calculus},
  Topology and its Applications \textbf{235}, pages 375--427 (2018).
  \newline\urlprefix\url{https://doi.org/10.1016/j.topol.2017.12.010}

\bibitem{blute2009cartesian}
Blute, R.~F., J.~R.~B. Cockett and R.~A.~G. Seely, \emph{Cartesian differential
  categories}, Theory and Applications of Categories \textbf{22}, pages
  622--672 (2009). Available online at \url{http://www.tac.mta.ca/tac/volumes/22/23/22-23abs.html}

\bibitem{blute2015cartesian}
Blute, R.~F., J.~R.~B. Cockett and R.~A.~G. Seely, \emph{Cartesian differential
  storage categories}, Theory and Applications of Categories \textbf{30}, pages
  620--686 (2015).
  \newline\urlprefix\url{https://doi.org/10.1016/S0304-3975(01)00243-2}

\bibitem{bucalo2003equational}
Bucalo, A., C.~F{\"u}hrmann and A.~Simpson, \emph{An equational notion of
  lifting monad}, Theoretical Computer Science \textbf{294}, pages 31--60
  (2003).

\bibitem{cockettetal:LIPIcs:2020:11661}
Cockett, J. R.~B., G.~S.~H. Cruttwell, J.~D. Gallagher, J.-S.~P. Lemay,
  B.~MacAdam, G.~Plotkin and D.~Pronk, \emph{Reverse derivative categories},
  LIPIcs \textbf{152}, pages 18:1--18:16 (2020).
\newline\urlprefix\url{https://doi.org/10.4230/LIPIcs.CSL.2020.18}

\bibitem{Cockett-2019}
Cockett, J. R.~B. and J.~D. Gallagher, \emph{Categorical models of the
  differential $\lambda$-calculus}, Mathematical Structures in Computer Science
   (2019).
\newline\urlprefix\url{https://doi.org/10.1017/S0960129519000070}

\bibitem{cockett2014differential}
Cockett, R. and G.~Cruttwell, \emph{Differential structure, tangent structure,
  and {SDG}}, Applied Categorical Structures \textbf{22}, pages 331--417
  (2014).
  \newline\urlprefix\url{https://doi.org/10.1007/s10485-013-9312-0}

\bibitem{cockett_et_al:LIPIcs:2020:11660}
Cockett, R., J.-S.~P. Lemay and R.~Lucyshyn-Wright, \emph{Tangent categories
  from the coalgebras of differential categories}, in: M.~Fern{\'a}ndez and
  A.~Muscholl, editors, \emph{28th EACSL Annual Conference on Computer Science
  Logic (CSL 2020)}, volume 152 of \emph{Leibniz International Proceedings in
  Informatics (LIPIcs)}, pages 17:1--17:17, Schloss Dagstuhl--Leibniz-Zentrum
  fuer Informatik, Dagstuhl, Germany (2020), ISBN 978-3-95977-132-0, ISSN
  1868-8969.
\newline\urlprefix\url{https://doi.org/10.4230/LIPIcs.CSL.2020.17}

\bibitem{cruttwell2020categorical}
Cruttwell, G., J.~Gallagher and D.~Pronk, \emph{Categorical semantics of a
  simple differential programming language}, in: \emph{Proceedings of the 3rd
  Annual International Applied Category Theory Conference 2020, {ACT} 2020},
  volume 333 of \emph{{EPTCS}}, pages 289--310 (2020).
\newline\urlprefix\url{https://doi.org/10.4204/EPTCS.333.20}

\bibitem{cruttwell2021categorical}
Cruttwell, G., B.~Gavranovi{\'c}, N.~Ghani, P.~Wilson and F.~Zanasi,
  \emph{Categorical foundations of gradient-based learning}, in:
  \emph{Programming Languages and Systems}, pages 1--28, Springer International
  Publishing (2022).
\newline\urlprefix\url{https://doi.org/10.1007/978-3-030-99336-8_1}

\bibitem{ehrhard2003differential}
Ehrhard, T. and L.~Regnier, \emph{The differential lambda-calculus},
  Theoretical Computer Science \textbf{309}, pages 1--41 (2003).
  \newline\urlprefix\url{https://doi.org/10.1016/S0304-3975(03)00392-X}

\bibitem{fuhrmann1999direct}
F{\"u}hrmann, C., \emph{Direct models of the computational lambda-calculus},
  Electronic Notes in Theoretical Computer Science \textbf{20}, pages 245--292
  (1999).
    \newline\urlprefix\url{https://doi.org/10.1016/S1571-0661(04)80078-1}

\bibitem{garner2020cartesian}
Garner, R. and J.-S.~P. Lemay, \emph{Cartesian differential categories as skew
  enriched categories}, Applied Categorical Structures pages 1--52 (2021).
      \newline\urlprefix\url{https://doi.org/10.1007/s10485-021-09649-7}

\bibitem{ikonicoff2021cartesian}
Ikonicoff, S. and J.-S.~P. Lemay, \emph{Cartesian differential comonads and new
  models of cartesian differential categories}, Cahiers de topologie et
  g{\'e}om{\'e}trie diff{\'e}rentielle cat{\'e}goriques pages 198--239 (2023). Available online at \url{http://cahierstgdc.com/wp-content/uploads/2023/04/IKONICOFF-LEMAY-LXIV-2.pdf}

\bibitem{laird2013constructing}
Laird, J., G.~Manzonetto and G.~McCusker, \emph{Constructing differential
  categories and deconstructing categories of games}, Information and
  Computation \textbf{222}, pages 247--264 (2013).
\newline\urlprefix\url{https://doi.org/10.1016/j.ic.2012.10.015}

\bibitem{lemay2018tangent}
Lemay, J.-S.~P., \emph{A tangent category alternative to the faa di bruno
  construction}, Theory and Applications of Categories \textbf{33}, pages
  1072--1110 (2018). Available online at \url{http://www.tac.mta.ca/tac/volumes/33/35/33-35abs.html}

\bibitem{manzyuk2012tangent}
Manzyuk, O., \emph{Tangent bundles in differential lambda-categories}, arXiv
  preprint arXiv:1202.0411  (2012).
  \newline\urlprefix\url{https://doi.org/10.48550/arXiv.1202.0411}


\bibitem{moggi1991notions}
Moggi, E., \emph{Notions of computation and monads}, Information and
  computation \textbf{93}, pages 55--92 (1991).
    \newline\urlprefix\url{https://doi.org/10.1016/0890-5401(91)90052-4}

\bibitem{mulry1994lifting}
Mulry, P.~S., \emph{Lifting theorems for Kleisli categories}, in:
  \emph{Mathematical Foundations of Programming Semantics: 9th International
  Conference New Orleans, LA, USA, April 7--10, 1993 Proceedings 9}, pages
  304--319, Springer (1994).
      \newline\urlprefix\url{https://doi.org/10.1007/3-540-58027-1_15}

\bibitem{mulry2002lifting}
Mulry, P.~S., \emph{Lifting results for categories of algebras}, Theoretical
  Computer Science \textbf{278}, pages 257--269 (2002).
    \newline\urlprefix\url{https://doi.org/10.1016/S0304-3975(00)00338-8}

\bibitem{sprunger2019differentiable}
Sprunger, D. and S.~Katsumata, \emph{Differentiable causal computations via
  delayed trace}, in: \emph{2019 34th Annual ACM/IEEE Symposium on Logic in
  Computer Science (LICS)}, pages 1--12, IEEE (2019).
\newline\urlprefix\url{https://doi.org/10.1109/LICS.2019.8785670}

\bibitem{street1972formal}
Street, R., \emph{The formal theory of monads}, Journal of Pure and Applied
  Algebra \textbf{2}, pages 149--168 (1972).
  \newline\urlprefix\url{https://doi.org/10.1016/0022-4049(72)90019-9}

\bibitem{wilson2021reverse}
Wilson, P. and F.~Zanasi, \emph{Reverse derivative ascent: A categorical
  approach to learning boolean circuits}, in: \emph{Proceedings of Applied
  Category Theory 2020}, pages 247--260 (2020).
\newline\urlprefix\url{https://doi.org/10.4204/EPTCS.333.17}

\end{thebibliography}

\end{document}